\documentclass[11pt,a4paper]{amsart}
\usepackage{a4wide,amsfonts,amsmath,latexsym,amssymb,euscript,graphicx,units,mathrsfs,bbold}   \usepackage[utf8]{inputenc}    
\usepackage[T1]{fontenc}
\usepackage{csquotes}
\usepackage{amsmath, amsthm, amsfonts, amssymb,color}
\usepackage{a4wide,amsfonts,amsmath,latexsym,amssymb,euscript,graphicx,units,mathrsfs,bbold}   

\usepackage{amsmath, amsthm, amsfonts, amssymb,color,comment}
\usepackage{bm}
\usepackage{theoremref}

\usepackage{enumitem}
\usepackage[margin=1in]{geometry}

\usepackage{mathtools}

\def\e{\epsilon}
\def\R{\mathbb R}
\def\E{\mathrm E}

\newtheorem{prop}{Proposition}[section]
\newtheorem{proposition}{Proposition}[section]

\newtheorem{lemma}[prop]{Lemma}
\newtheorem{definition}[prop]{Definition}
\newtheorem{corollary}[prop]{Corollary}
\newtheorem{theorem}[prop]{Theorem}

\numberwithin{equation}{section}
\setcounter{page}{1}

   \begin{document}
   
   \title[Feynman-Kac formula for the  iterated derivatives of the PAM]{Feynman-Kac formula for iterated derivatives of  the parabolic Anderson model}

 \author[S.\ Kuzgun]{Sefika Kuzgun}
\address{University of Kansas, Department of Mathematics, USA}
\email{sefika.kuzgun@ku.edu}

\author[D.\ Nualart]{David Nualart} \thanks{Research supported  by the NSF grant DMS-1811181}
\address{University of Kansas, Department of Mathematics, USA}
\email{nualart@ku.edu}

\begin{abstract} The purpose of this paper is to establish a Feynman-Kac formula for the moments of the iterated Malliavin derivatives of the solution to the parabolic Anderson model in terms of  pinned Brownian motions. As an application,  we obtain estimates for the 
moments of the iterated derivatives of the solution.

\medskip\noindent
{\bf Mathematics Subject Classifications (2010)}:  60H07, 60H15.

\medskip\noindent
{\bf Keywords}:   Feynman-Kac formula, stochastic heat equation, Malliavin calculus.
\end{abstract}

\maketitle


\section{Introduction}

Consider the following stochastic heat equation (SHE) also referred as  the parabolic Anderson model (PAM):
\begin{equation}\label{PAM} 
  \frac{\partial u}{\partial t}= \frac{1}{2}\Delta u+ u \Dot{W},  \qquad x\in \R^d, \,\,  t \in (0,\infty), 
\end{equation}  
where $d\geq 1$, and $\Delta=\sum_{i=1}^d \frac{\partial ^2}{\partial x_i^2}$ is the Laplacian operator. We assume that $\Dot{W}$  is a centered Gaussian noise that is white in time and has an homogeneous covariance  in the space variable. This is to say, informally, \begin{align*}
    \E \left(\Dot{W}(t_1,x_1)\Dot{W}(t_2,x_2)\right)=\delta_0(t_1-t_2)\Lambda(x_1-x_2),
\end{align*}
where $\delta_0$ is the Dirac delta measure at zero, and $\Lambda$ is a non-negative definite tempered Borel measure on $\R^d$, satisfying Dalang's condition (\ref{Dalangscondition}). The initial condition $u_0$    is assumed to be a signed  Borel   measure on $\R^d$ such that for all $c>0$,
\begin{equation} \label{initial}
\displaystyle
 \int_{\R^d} e^{-c |x|^2} |u_0|(d x) <\infty.
\end{equation}
In particular,  the initial data  $u_0$ could be the Dirac measure $\delta_0$.

There is an extensive literature on the stochastic heat equation driven by an homogeneous noise. 
The existence and uniqueness of a mild solution to  a nonlinear equation (with the noise multiplied by a Lipschitz function $\sigma(u)$) was first proved in  \cite{dalang1999extending} when $u_0$ is a bounded function, and was extended to the case where $u_0$ satisfies (\ref{initial}) in \cite{ChenDalang13Heat} for the case where the noise is a space-time white noise in $d=1$, and in \cite{chen2019regularity} to the case where noise is white in time and satisfies Dalang's condition in space. 
Later the H\"older continuity in the space and time variables were established in \cite{Sanz-Sarra,ChenDalang14Holder}.  
Using techniques of Malliavin calculus, several authors studied  regularity  properties of the density of the solution; see, for instance \cite{CHN-19,chen2019regularity}.   When $\sigma(u)=u$, that means for equation  (\ref{PAM}), and $u_0$ is a bounded function,
Feynman-Kac formulas for the solution and its moments  were derived in \cite{hu2015stochastic}.

 Recently,  assuming $u_0=1$, the Malliavin-Stein methodology has been applied to  establish ergodicity in the space variable and to derive quantitative central limit theorems for spatial averages, see \cite{CKNP-1,CKNP-2,HuangNualartViitasaari2018,HuangNualartViitasaariZheng2019}.
  A basic ingredient for these results is the fact that the $p$-norm of the Malliavin derivative of the solution at a given point, $\| D_{s,y} u(t,x)\|_p$ is bounded by a constant times the fundamental solution 
 $p_{t-s}(x-y)$ to the heat equation.  Motivated by these applications,  the aim of this paper is to establish a  Feynman-Kac formula for  the moments of the iterated derivatives of the solution $u(t,x)$ to  equation (\ref{PAM}) and to derive moment estimates.

Let us introduce some notation. For any integer $N\ge 1$,  $D^N_{\pmb{r}_N,\pmb{z}_N}u(t,x)$ denotes the $N$-th iterated derivative of $u(t,x)$ in the sense of Malliavin calculus, that is
 \begin{align*}
    D^N_{\pmb{r}_N,\pmb{z}_N}u(t,x)=D_{r_1,z_1}\cdots D_{r_N,z_N}u(t,x),
\end{align*}
for any  $z_1, \dots, z_N \in \R^d$ and $0<r_1 < \cdots < r_N$ and with the notation $\pmb{r}_N=(r_1, \dots, r_N)$ and $\pmb{z}_N=(z_1, \dots, z_N)$.  We will denote by $p_{t}(x)$ the $d$-dimensional heat kernel:
\begin{equation} \label{p}
p_{t}(x)= (2\pi t)^{-d/2} \exp( -|x|^2/(2t)), \qquad x\in \R^d, \,\, t>0.
\end{equation}

Our main result, stated below, provides an explicit formula for the moments of order $k$ of the iterated Malliavin derivatives of $u(t,x)$ in terms of the expectation of an exponential functional of a $k$-dimensional pinned Brownian motion.

\begin{theorem} \label{thm1}
Let $u$ be the unique solution of  (\ref{PAM}) with initial condition $u_0$ which is a signed Borel measure satisfying (\ref{initial}). Let $N \ge 1$   be  an integer and $z_1,\dots, z_N \in \mathbb{R}^d$, $0<r_1<\cdots <r_N <t$. Then for  integer $k\ge 2$, we have
\begin{align*}
    \E \left[ \left( D^N_{\pmb{r}_N,\pmb{z}_N}u(t,x)\right)^k \right] &=  \left[  \prod_{m=1}^{N-1} p_{r_{m+1}-r_m}(z_{m+1}-z_m) \right]^k p^k_{t-r_N}(x-z_N) \\
 & \qquad \times   \int_{\R^{kd}}  \prod_{j=1}^k u_0(d\theta^j)  \prod_{j=1}^kp_{r_1}(z_1- \theta^j)  \\
 & \qquad \times \E  \left(\exp{\left(\sum_{1\leq j<l\leq k}\int_0^t  \Lambda(\widehat{B}_{0,\pmb{t-r}_N,t}^{j,x, \pmb{z}_N, \theta^j}(s)-\widehat{B}_{0,\pmb{t-r}_N,t}^{l,x,\pmb{z}_N, \theta^l}(s))ds \right)}  \right),
\end{align*}
 where   $\pmb{t-r}_N=(t-r_1,\dots,t-r_N)$ and $\widehat{B}_{0,\pmb{t-r}_N,t}^{j,x,\pmb{z}_N, \theta^j}$, $j=1,\dots, k$ are independent  $d$-dimensional pinned Brownian motions starting from $x$  with each component pinned at times $t-r_m$ to  the points $z_m$ for $1\leq m\leq N$, and pinned at  $\theta^j$ at time $t$.
 
 Moreover,
 \[
\E  \left(\exp{\left(\sum_{1\leq j<l\leq k}\int_0^t  \Lambda(\widehat{B}_{0,\pmb{t-r}_N,t}^{j,x, \pmb{z}_N, \theta^j}(s)-\widehat{B}_{0,\pmb{t-r}_N,t}^{l,x,\pmb{z}_N, \theta^l}(s))ds \right)}  \right)\le C_{t,k},
  \]
  where $C_{t,k}$ is a constant depending only on $t$ and $k$.
  \end{theorem}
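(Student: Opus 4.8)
The plan is to bound the exponential functional directly from the Gaussian structure of the pinned Brownian motions, reducing first to a single pair and then to a single bridge. Write $W^{jl}(s)=\widehat{B}^{j}(s)-\widehat{B}^{l}(s)$ and $X_{jl}=\int_0^t\Lambda(W^{jl}(s))\,ds$; since $\Lambda$ is a non-negative measure, each $X_{jl}\ge 0$ and the whole exponent is real and non-negative. Applying the generalized H\"older inequality to the $\binom{k}{2}$ (non-independent) variables $X_{jl}$,
\[
\E\exp\Big(\sum_{1\le j<l\le k}X_{jl}\Big)\le\prod_{1\le j<l\le k}\Big(\E\exp\big(\tbinom{k}{2}X_{jl}\big)\Big)^{1/\binom{k}{2}},
\]
so it suffices to bound $\E\exp(\theta\int_0^t\Lambda(W(s))\,ds)$ with $\theta=\binom{k}{2}$, where $W=\widehat{B}^{j}-\widehat{B}^{l}$ is the difference of two independent pinned Brownian motions. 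I would then pass to the spectral measure $\mu$ (the Fourier transform of $\Lambda$, a non-negative tempered measure satisfying Dalang's condition $\int_{\R^d}(1+|\xi|^2)^{-1}\mu(d\xi)<\infty$) and expand the exponential as a series, so that the $n$-th term equals
\[
\theta^n\int_{\{0<s_1<\cdots<s_n<t\}}\int_{\R^{nd}}\exp\Big(-\tfrac12\operatorname{Var}\big(\textstyle\sum_{a=1}^n\xi_a\cdot\widehat{W}(s_a)\big)\Big)\prod_{a=1}^n\mu(d\xi_a)\,ds_a,
\]
where $\widehat{W}$ denotes the centered process. A crucial, essentially free observation is that only the covariance of $\widehat{W}$ enters here: the deterministic drift of $W$, which carries all the dependence on $x$, the $z_m$ and the $\theta^j$, is killed by taking the modulus of the characteristic function. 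This already yields uniformity of the bound in all the spatial pinning locations.

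Next I would exploit the pinning in time. Let $0=\tau_0<\tau_1<\cdots<\tau_{N+1}=t$ be the ordered pinning times $\{0,\,t-r_N,\dots,t-r_1,\,t\}$. Because $\widehat{B}^{j}$ and $\widehat{B}^{l}$ are pinned to the \emph{same} intermediate points, $\widehat{W}$ is pinned to $0$ at $\tau_0,\dots,\tau_N$ and, being a difference of independent Brownian bridges, its restrictions to the disjoint subintervals $[\tau_i,\tau_{i+1}]$ are mutually independent. Since $\int_{\tau_i}^{\tau_{i+1}}\Lambda(W(s))\,ds$ depends only on the $i$-th restriction, the exponential moment factorizes,
\[
\E\exp\Big(\theta\int_0^t\Lambda(W(s))\,ds\Big)=\prod_{i=0}^{N}\E\exp\Big(\theta\int_{\tau_i}^{\tau_{i+1}}\Lambda(W(s))\,ds\Big),
\]
and the problem reduces to estimating, uniformly in the length $\delta=\tau_{i+1}-\tau_i\le t$, the exponential moment of $\theta\int_0^{\delta}\Lambda(\beta(s))\,ds$ for a single centered Brownian-bridge-difference $\beta$ on $[0,\delta]$.

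For this single-bridge estimate I would again expand in a series and bound the Gaussian factor from above by bounding $\operatorname{Var}(\sum_a\xi_a\cdot\beta(s_a))$ from below. The key point is that on the first half $[0,\delta/2]$ the bridge covariance dominates half the free covariance, since the per-coordinate variance satisfies $2\,s(\delta-s)/\delta\ge s$ there, and symmetrically on $[\delta/2,\delta]$; splitting the time variables accordingly converts each factor into a free-Brownian-motion type integral controlled by Dalang's condition through estimates of the form $\int_{\R^d}\min(t,c|\xi|^{-2})\,\mu(d\xi)\le C_t\int_{\R^d}(1+|\xi|^2)^{-1}\mu(d\xi)$, together with the integrability of the Beta-type time integrals near the degenerate endpoints. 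This produces a per-interval bound of the form $1+\Phi(\delta)$ with $\Phi(\delta)\to 0$ as $\delta\to 0$ and $\Phi$ nondecreasing, whose product over the $N+1$ subintervals is finite and, for $N$ fixed, bounded by a constant $C_{t,k}$ (which also depends on $N$ and on $\Lambda$) uniformly over all admissible pinning times and locations.

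The main obstacle is precisely this last single-bridge estimate: the bridge has \emph{smaller} variance than free Brownian motion, since conditioning only reduces variance, so one cannot dominate the bridge functional by the known finite free-Brownian-motion functional; instead one must control the degeneracy of the covariance at the pinned endpoints directly. The half-interval comparison above is the device that converts the bridge estimate into the known free-motion estimate while keeping the endpoint singularity integrable, and verifying that the resulting per-interval contributions multiply to a finite constant is the technical heart of the argument.
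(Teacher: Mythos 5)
There is a genuine gap: your proposal addresses only the final clause of the theorem (the bound $\le C_{t,k}$ on the exponential functional) and says nothing about the main assertion, namely the Feynman--Kac identity for $\E\bigl[\bigl(D^N_{\pmb{r}_N,\pmb{z}_N}u(t,x)\bigr)^k\bigr]$. That identity is where essentially all of the work in the paper lies: one must (i) regularize the noise and obtain an explicit Feynman--Kac representation of $u^\epsilon(t,x)$ (Lemma \ref{lem1}), (ii) differentiate it to get a closed formula for $D^N_{\pmb{r}_N,\pmb{z}_N}u^{\epsilon}(t,x)$ and compute its $k$-th moment by conditioning the $k$ auxiliary Brownian motions on their positions at the times $t-r_N<\cdots<t-r_1<t$, which is precisely what produces the pinned bridges and the product of heat kernels (equations (\ref{momentsofuepsilon})--(\ref{momentsofuepsilonconditional})), (iii) pass to the limit $\epsilon\to0$ using the uniform convergence of the exponential functionals and an approximation-of-the-identity argument (Propositions \ref{lambdaestimates} and \ref{momentsofderivative}), and (iv) --- the least obvious step --- identify the $L^p$-limit $\Phi_{\pmb{r}_N,\pmb{z}_N}(t,x)$ of $D^N_{\pmb{r}_N,\pmb{z}_N}u^{\epsilon}(t,x)$ with the actual iterated Malliavin derivative $D^N_{\pmb{r}_N,\pmb{z}_N}u(t,x)$, which the paper does via Wiener chaos expansions (Propositions \ref{Phi}, \ref{H^N}, \ref{prop4.5}). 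None of this appears in your argument, and it cannot be bypassed: without step (iv) in particular there is no reason the right-hand side you are bounding has anything to do with $D^Nu(t,x)$. Note also that when $\Lambda$ is a genuine measure the expression $\int_0^t\Lambda(W(s))\,ds$ must first be \emph{defined} (as the $L^2$-limit of $\int_0^t\Lambda_\epsilon(W(s))\,ds$, Proposition \ref{basic}(ii)); your series manipulations treat it as an ordinary composition.

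Concerning the part you do address, the overall strategy (H\"older reduction to a single pair, factorization over the pinning subintervals by independence of the bridge pieces, reduction to a single bridge, comparison with free Brownian motion under Dalang's condition) is sound and is essentially how the paper proceeds, via the decomposition in Proposition \ref{lambdaestimates} together with the bound of Lemma 4.1 of \cite{huang2017large}; the uniformity in the pinning locations via the vanishing of the drift in the modulus of the characteristic function is also the right observation. Two points need repair, however. First, your half-interval comparison is stated only for the per-coordinate variances at single times; to bound $\exp(-\tfrac12\operatorname{Var}(\sum_a\xi_a\cdot\beta(s_a)))$ you need the full covariance kernels to satisfy $s\wedge s'-ss'/\delta\ge\tfrac12(s\wedge s')$ in the positive-semidefinite order on $[0,\delta/2]^2$ (true, since the difference is half the covariance of a bridge on $[0,\delta/2]$, but this must be argued; a diagonal comparison does not suffice in general). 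Second, time configurations with points in both halves of $[0,\delta]$ are not covered by treating the halves separately, since the two halves of a single bridge are not independent; the standard fix is to apply Cauchy--Schwarz to $\exp(\theta\int_0^{\delta/2}+\theta\int_{\delta/2}^{\delta})$ first and then use absolute continuity of the half-bridge with respect to Brownian motion (with bounded density) on each half.
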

  
  In the above theorem, taking into account that $\Lambda$ might be a measure,  the composition  $\Lambda(\widehat{B}_{0,\pmb{t-r}_N,t}^{j,x, \pmb{z}_N, \theta^j}(s)-\widehat{B}_{0,\pmb{t-r}_N,t}^{l,x,\pmb{z}_N, \theta^l}(s))$ needs to be  properly defined  as a limit in $L^2(\Omega)$, using an approximation argument, see   Proposition \ref{basic}, part (ii). When $\Lambda(dx) = \Lambda(x) dx$, then this is just an ordinary composition of the density $\Lambda$ with the random variable  $\widehat{B}_{0,\pmb{t-r}_N,t}^{j,x, \pmb{z}_N, \theta^j}(s)-\widehat{B}_{0,\pmb{t-r}_N,t}^{l,x,\pmb{z}_N, \theta^l}(s)$. 
  
  As a consequence of Theorem \ref{thm1},  we deduce the following result.
  
  \begin{corollary} \label{cor1}
 Under the assumptions and notation of Theorem  \ref{thm1}, we have
 \begin{equation} \label{ec1}
 \left\|  D^N_{\pmb{r}_N,\pmb{z}_N}u(t,x)\right \|_k  \leq C^{1/k} _{t,k}  (p_{r_1}*|u_0|)(z_1)\left(\prod_{m=1}^{N-1}  p_{r_{m+1}-r_m}(z_{m+1}-z_m)\right) p_{t-r_N} (x-z_N).
\end{equation}
\end{corollary}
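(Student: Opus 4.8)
The plan is to raise \eqref{ec1} to the $k$-th power and read everything off the explicit moment formula of Theorem~\ref{thm1}. Since $\|D^N_{\pmb r_N,\pmb z_N}u(t,x)\|_k=\big(\E\,[\,|D^N_{\pmb r_N,\pmb z_N}u(t,x)|^k]\big)^{1/k}$, it suffices to prove
\[
\E\,\big[\,|D^N_{\pmb r_N,\pmb z_N}u(t,x)|^k\big]\le C_{t,k}\Big(\prod_{m=1}^{N-1}p_{r_{m+1}-r_m}(z_{m+1}-z_m)\Big)^{k}p_{t-r_N}(x-z_N)^k\big((p_{r_1}*|u_0|)(z_1)\big)^k,
\]
after which the $k$-th root gives \eqref{ec1}. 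For $k$ even the absolute value is immaterial and Theorem~\ref{thm1} computes the left-hand side directly; for odd $k$ I would first pass to the smallest even integer $k'\ge k$ using the monotonicity $\|\cdot\|_k\le\|\cdot\|_{k'}$ of the $L^k(\Omega)$-norms, so that it is enough to treat the even case. This is harmless because the deterministic heat-kernel factors on the right of \eqref{ec1} do not depend on $k$; only the constant does.

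Assuming then that $k$ is even, I would substitute the formula of Theorem~\ref{thm1}. Every factor other than the measure $u_0$ is nonnegative: the heat kernels $p_{r_{m+1}-r_m}$, $p_{t-r_N}$ and $p_{r_1}(z_1-\theta^j)$ are positive, and the exponential expectation is nonnegative. Hence replacing the signed measure $u_0$ by its total variation $|u_0|$ only increases the integral,
\[
\int_{\R^{kd}}\prod_{j=1}^k u_0(d\theta^j)\,\prod_{j=1}^k p_{r_1}(z_1-\theta^j)\,\E\big(\exp(\cdots)\big)\le \int_{\R^{kd}}\prod_{j=1}^k |u_0|(d\theta^j)\,\prod_{j=1}^k p_{r_1}(z_1-\theta^j)\,\E\big(\exp(\cdots)\big),
\]
where I have used that the total variation of the product measure $u_0^{\otimes k}$ is $|u_0|^{\otimes k}$. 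I then invoke the second part of Theorem~\ref{thm1} to bound the exponential expectation by $C_{t,k}$ and pull it out, and apply Fubini to factorize the remaining $\theta$-integral,
\[
\int_{\R^{kd}}\prod_{j=1}^k |u_0|(d\theta^j)\,\prod_{j=1}^k p_{r_1}(z_1-\theta^j)=\prod_{j=1}^k\int_{\R^d}p_{r_1}(z_1-\theta)\,|u_0|(d\theta)=\big((p_{r_1}*|u_0|)(z_1)\big)^k.
\]
Collecting the deterministic prefactor $\big(\prod_{m=1}^{N-1}p_{r_{m+1}-r_m}(z_{m+1}-z_m)\big)^k p_{t-r_N}(x-z_N)^k$ yields the displayed bound on the $k$-th moment, and taking the $k$-th root gives \eqref{ec1}.

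The only genuinely delicate point is the interplay of the absolute value with the signed initial datum. For odd $k$ and a signed $u_0$ the quantity $\E[(D^N_{\pmb r_N,\pmb z_N}u(t,x))^k]$ produced by Theorem~\ref{thm1} need not coincide with $\E[|D^N_{\pmb r_N,\pmb z_N}u(t,x)|^k]$ and may even be negative, so the formula cannot be applied verbatim; the reduction to the next even exponent via norm monotonicity is what makes the estimate uniform in $k$, at the cost of absorbing the change of index into the constant $C_{t,k}$. Everything else is a direct substitution into Theorem~\ref{thm1} followed by the factorization of a product measure, which are routine.
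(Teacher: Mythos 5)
Your proof is correct and is essentially the derivation the paper intends (the paper states the corollary without proof, as an immediate consequence of Theorem \ref{thm1}): substitute the moment formula, bound the exponential expectation by $C_{t,k}$, majorize $u_0^{\otimes k}$ by $|u_0|^{\otimes k}$ using the positivity of all the other factors, factorize the $\theta$-integral, and take the $k$-th root. Your extra care with odd $k$ (passing to the next even exponent by norm monotonicity and absorbing the index change into the constant) addresses a point the paper glosses over, and you handle it correctly.
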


Here is a survey of previous results related to Corollary \ref{cor1}, obtained by alternative methods.
The  estimate (\ref{ec1}) for $N=1$ was first proved  in  \cite[Lemma 5.1]{HuangNualartViitasaari2018} for the nonlinear stochastic heat equation with $u_0=1$, driven by a space-time white noise,   and later extended to the case of a Riesz-type covariance in  \cite[Lemma 2.1]{HuangNualartViitasaariZheng2019} and to  the case of a general spatial covariance satisfying  (\ref{Dalangscondition}) in     \cite[Theorem 4.6]{CKNP-1}. In the paper \cite{CKNP-4}, the authors have also considered the case $N=2$, but only for the parabolic Anderson  model (\ref{ec1}), the case of a general coefficient $\sigma$ being an open problem. Finally, in the case of equation (\ref{PAM}) with $u_0=\delta_0$, the estimate  (\ref{ec1}) for $N=1$  has been obtained in \cite[Lemma 2.1]{CKNP-3}. 

The paper is organized as follows. After some preliminaries in Section 2, Section 3 is devoted to establishing a Feynman-Kac formula for the solution to equation 
(\ref{PAM}) driven by a regularization $W^\epsilon$ of the noise. In this section we also derive a Feynman-Kac formula for the moments of the solution to equation
(\ref{PAM}).  Finally, Section 4 contains the proof of  Theorem \ref{thm1}.


\section{Preliminaries}

\subsection{The Set-Up}
Firstly, we introduce some notation.  The space of Schwartz functions, that is,  the space of infinitely differentiable rapidly decreasing functions on $\R_+\times \R^d$, respectively on $ \R^d$,  will be denoted by $\mathscr{S}(\R_+\times \R^d)$, respectively by $\mathscr{S}(\R^d)$.  Then $\mathscr{S}'(\R^d)$ will denote the dual of $\mathscr{S}(\R^d)$ and its elements are the so-called tempered distributions. The Fourier transform of a tempered Borel measure $\nu$ on $\R^d$ is defined by \begin{align*}
    \mathscr{F}(\nu)(\xi)=\int_{\R^d}e^{- i \xi\cdot x}\nu(dx),
\end{align*}
which is a slowly increasing $C^{\infty}$ function.  

Let $(\Omega, \mathcal{F}, P)$ be a complete probability space, and $W$ a Gaussian noise encoded by a centered Gaussian family $\{W(\varphi); \varphi \in \mathscr{S}(\R_+\times \mathbb{R}^d)\}$ with the covariance structure \begin{align} \label{CovarianceSturucture}
    E\left(W(\varphi)W(\psi)\right)=\big\langle \varphi, \psi * \big( \delta_0 \times \Lambda   \big)\big\rangle_{L^2(\R_+\times \R^d)} ,
\end{align}
where   $\Lambda$ is a nonnegative definite tempered Borel measure on $\R^d$.  We assume that the  Fourier transform of $\Lambda$, denoted  by $ \mathscr{F}\Lambda=\mu$, is a measure satisfying Dalang's condition:
\begin{align} \label{Dalangscondition}
    \int_{\mathbb{R}^d} \frac{\mu(d\xi)}{1+|\xi|^2} <\infty.
\end{align}  

Let $\mathcal{H}$ be the completion of $\mathscr{S}(\R_+\times \mathbb{R}^d)$ endowed with the inner product 
\begin{align} \label{innerproduct}
    \langle \varphi , \psi\rangle_{\mathcal{H}}&=\big\langle \varphi, \psi * \big( \delta_0 \times \Lambda  \big)\big\rangle_{L^2(\R_+\times \R^d)}\\  \notag
    &=\int_0^{\infty} \int_{ \mathbb{R}^{2d}} \varphi{(s,y)}\psi(s,y-y')\Lambda(dy')dyds \\
    \nonumber &= \int_0^{\infty} \int_{ \mathbb{R}^{d}} \mathscr{F}\varphi(s,\cdot)(\xi) \overline{\mathscr{F}\psi(s, \cdot)(\xi)}\mu(d\xi)ds.
\end{align}

The mapping $\varphi \mapsto W(\varphi)$ defined on $\mathscr{S}(\R_+\times \mathbb{R}^d)$  can be extended to a linear isometry between $\mathcal{H}$ and the Gaussian space spanned by $W$. This isometry will be denoted by \begin{align}
    W(\varphi)=\int_0^{\infty} \int_{ \mathbb{R}^{d}} \varphi(s,y)W(ds,dy)
\end{align}
for $\varphi \in \mathcal{H}$. With this notation in mind, we have $\E\left(W(\varphi) W( \psi ) \right)=\langle \varphi , \psi \rangle_{\mathcal{H}}$. We will use the notation $W(t,x)$ as shorthand for $W(\mathbb{1}_{[0,t]\times[0,x]})$, so that we can write \begin{align*}
    \E\left(W(t,x) W(t',x' ) \right)&=\left(t\wedge t'\right) \int_{\mathbb{R}^{2d}}\mathbb{1}_{[0,x]}(y)\mathbb{1}_{[0,x']}(y-y') \Lambda(dy')dy \\
    &= (t\wedge t')\int_{\mathbb{R}^d} \mathscr{F}\mathbb{1}_{[0,x]}(\cdot)(\xi)\overline{\mathscr{F}\mathbb{1}_{[0,x']}(\cdot)(\xi)}\mu(d\xi).
\end{align*}
Let $\mathcal{H}_0$ be  the completion of $\mathscr{S}(\mathbb{R}^d)$ with the inner product 
\begin{align}
    \langle \phi_1 ,\phi_2 \rangle_{\mathcal{H}_0} =\int_{\mathbb{R}^{2d}} \phi_1(y)\phi_2(y-y')\Lambda(dy')dy.
\end{align}
In this way, we have  $\mathcal{H} = L^2( \R_+; \mathcal{H}_0)$.

For each $t \geq 0$, let $\mathcal{F}_t$ be the $\sigma$-field generated by the random variables $W(\varphi)$ where $\varphi$ has support in $[0,t]\times \mathbb{R}^d$.  We say that a random field $u=\{u(t,x); (t,x) \in [0,\infty) \times \mathbb{R}^d\}$
 is adapted if  for each $(t,x)\in [0,\infty)\times \R^d$ the random variable $u(t,x)$ is $\mathcal{F}_t$ measurable.

\begin{definition}{\label{solution}}  
 A random field $u=\{u(t,x); (t,x) \in [0,\infty) \times \mathbb{R}^d\}$ is called a  mild solution to (\ref{PAM})  with initial condition $u_0$, if
$u$ is adapted,  jointly measurable with respect to $\mathcal{B}\left([0,\infty)\times \R^d\right) \times \mathcal{F}$,
$\E\left(u(t,x)^2 \right) <\infty$ for all $(t,x)\in (
0,\infty)\times \mathbb{R}^d$  and $u$ satisfies the integral equation
 \begin{equation}\label{integralform}
    u(t,x)= (p_t * u_0)(x)+  \int_{0}^t \int_{\R^d} p_{t-s}(x-y)u(s,y)W(ds,dy), \qquad {\rm a.s.}
\end{equation}
for all $(t,x)\in 0,\infty)\times \R^d$,
 where the above stochastic integral is to be understood in the sense defined by Walsh in \cite{walsh1986introduction} and extended by Dalang in \cite{dalang1999extending}.
\end{definition}


\subsection{Malliavin Calculus}

In this subsection we will  recall some basic elements of the Malliavin calculus associated with $W$. 
For more details on the Malliavin calculus, we refer to \cite{eulalia,nualart2006malliavin}.
For a smooth and cylindrical random variable $F= f(W(\varphi_1), \dots , W(\varphi_n))$, with $\varphi_i \in  \mathcal{H}$, $1\le i \le n$ and $f \in C_b^{\infty}(\mathbb{R}^n)$ ($f$ and all  its partial derivatives are bounded), we define its Malliavin derivative as an $\mathcal{H}$-valued
random variable  defined as
\[
 DF = \sum_{i=1}^n \frac{\partial f}{\partial x_i} (W(\varphi_1), \dots, W(\varphi_n))\varphi_i\ .
\]
By iteration, we can also define the $k$-th derivative $D^k F$, which is an element in the space $L^2(\Omega; \mathcal{H}^{\otimes k})$. For any real $p\geq 1$ and any integer $k\geq 1$, the Sobolev space $\mathbb{D}^{k,p}$ is defined as the closure of the space of smooth and cylindrical random variables with respect to the norm $\|\cdot\|_{k,p}$ defined by 
\[
 \|F\|^p_{k,p} = \mathbb{E}(|F|^p) + \sum_{i=1}^k \mathbb{E}(\|D^i F\|^p_{\mathcal{H}^{\otimes i}}).
\]
We set $\mathbb{D}^{\infty}:=\displaystyle\cap_{k,p \in \mathbb{N}}  \mathbb{D}^{k,p}$.

The following result  ensures on the regularity of the solution to equation (\ref{PAM}) in the sense of Malliavin calculus.

\begin{proposition} For any $(t,x) \in (0,\infty)\times \R^d$, $u(t,x)\in \mathbb{D}^{\infty}$.
\end{proposition}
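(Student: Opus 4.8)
The plan is to combine a Picard approximation scheme with the closability of the iterated derivative operators $D^k$. Set $u^{(0)}(t,x)=(p_t*u_0)(x)$ and, recursively,
\[
u^{(n+1)}(t,x)=(p_t*u_0)(x)+\int_0^t\int_{\R^d}p_{t-s}(x-y)\,u^{(n)}(s,y)\,W(ds,dy),
\]
so that, by the existence and uniqueness theory quoted in the introduction, $u^{(n)}(t,x)\to u(t,x)$ in $L^p(\Omega)$ for every $p\ge1$ and each fixed $(t,x)$. The first step is to show, by induction on $n$, that $u^{(n)}(t,x)\in\mathbb{D}^\infty$ for all $n$: this is trivial for $n=0$, where $u^{(0)}$ is deterministic, and the inductive step uses the fact that the Walsh--Dalang (Skorokhod) stochastic integral sends $\mathbb{D}^\infty$-valued integrands to $\mathbb{D}^\infty$, together with the commutation rule between $D$ and the stochastic integral. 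Applying $D^k$ to the recursion, one finds that for $r_1<\cdots<r_k$ the derivative $D^k_{\pmb r_k,\pmb z_k}u^{(n+1)}(t,x)$ solves a linear integral equation whose stochastic-integral term has integrand $p_{t-s}(x-y)\,D^k_{\pmb r_k,\pmb z_k}u^{(n)}(s,y)$ and whose inhomogeneous term is a finite sum of products of heat kernels multiplied by Malliavin derivatives of $u^{(n)}$ of order strictly less than $k$; for $k=1$ this reads $D_{r,z}u^{(n+1)}(t,x)=p_{t-r}(x-z)u^{(n)}(r,z)+\int_r^t\int p_{t-s}(x-y)D_{r,z}u^{(n)}(s,y)W(ds,dy)$.

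The central step is a moment bound for $\|D^k u^{(n)}(t,x)\|_{\mathcal H^{\otimes k}}$ that is uniform in the Picard index $n$, obtained by a double induction on $k$ and on $n$. Taking $\mathcal H^{\otimes k}$-norms in the derivative equation, applying the Burkholder--Davis--Gundy inequality to the stochastic-integral term and using the second-moment isometry, one bounds $\mathbb{E}(\|D^k u^{(n+1)}(t,x)\|_{\mathcal H^{\otimes k}}^p)$ by a space-time convolution against $\Lambda$ of the corresponding quantity for $u^{(n)}$, plus the contribution of the inhomogeneous term, which is already controlled by the induction on $k$. Since Dalang's condition (\ref{Dalangscondition}) renders the spatial convolution with $\Lambda$ a bounded operation (after passing to Fourier variables and using $\int_{\R^d}\mu(d\xi)/(1+|\xi|^2)<\infty$), a Gronwall-type iteration in the time variable closes the recursion and yields
\[
\sup_n\ \mathbb{E}\big(\|D^k u^{(n)}(t,x)\|_{\mathcal H^{\otimes k}}^p\big)\ \le\ C_{k,p,t}\,\Big[\big(p_{r_1}*|u_0|\big)(z_1)\prod_{m=1}^{k-1}p_{r_{m+1}-r_m}(z_{m+1}-z_m)\,p_{t-r_k}(x-z_k)\Big]^p,
\]
which is finite for each fixed $(t,x)$ with $t>0$; here the integrability assumption (\ref{initial}) guarantees finiteness of the $u_0$-dependent factor even when $u_0$ is merely a measure.

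Finally, for fixed $(t,x)$ one has $u^{(n)}(t,x)\to u(t,x)$ in $L^p(\Omega)$ and, by the previous step, $\sup_n\|u^{(n)}(t,x)\|_{k,p}<\infty$ for every integer $k\ge1$ and every $p\ge2$. The standard weak-compactness (closability) criterion for the Sobolev spaces $\mathbb{D}^{k,p}$, see \cite{nualart2006malliavin}, then gives $u(t,x)\in\mathbb{D}^{k,p}$. Since $\mathbb{D}^{k,p}\subseteq\mathbb{D}^{k,q}$ whenever $p\ge q\ge1$ and both $k$ and $p$ are arbitrary, we conclude $u(t,x)\in\mathbb{D}^\infty$.

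I expect the uniform moment estimate to be the main obstacle: one must track how each differentiation simultaneously produces the deterministic chain of heat kernels and an additional stochastic-integral iteration, and verify that the Gronwall argument built on (\ref{Dalangscondition}) absorbs both the time singularity of the heat kernel and the possibly measure-valued initial datum, with constants that do not degenerate as $n\to\infty$. An alternative route, bypassing Picard iteration, is to expand $u(t,x)$ in Wiener chaos with its explicit kernels $f_n$ and to estimate the weighted series $\sum_{n\ge k}\tfrac{(n!)^2}{(n-k)!}\|f_n\|^2_{\mathcal H^{\otimes n}}$, upgrading from $p=2$ to general $p$ by hypercontractivity on each fixed chaos; the analytic core is again the convolution estimate governed by Dalang's condition.
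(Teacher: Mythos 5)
Your proposal is mathematically reasonable in outline, but it takes a completely different (and much longer) route than the paper: the paper's proof is a two-line citation, invoking Part (2) of Proposition 3.2 of \cite{chen2019regularity} for $u(t,x)\in\mathbb{D}^{1,p}$ and then observing that, because the equation is the parabolic Anderson model (so that differentiating the linear integrand never produces nonlinear remainder terms and the derivative equations of every order stay linear), the proof of Part (3) of that proposition upgrades this to $\mathbb{D}^{\infty}$. What you have written is essentially a self-contained reconstruction of the proof of that cited result: Picard iteration, induction on the order $k$ of the derivative, BDG plus the isometry under Dalang's condition, Gronwall in time, and the standard closability/weak-compactness criterion ($L^p$ convergence plus uniformly bounded $\|\cdot\|_{k,p}$ norms implies membership in $\mathbb{D}^{k,p}$). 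That criterion and the overall architecture are correct, and your remark that the linearity of $\sigma(u)=u$ is what makes all orders $k\ge 2$ tractable is exactly the point the paper's proof is making implicitly.

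Two details in your write-up need repair. First, your displayed uniform estimate does not typecheck: the left-hand side $\mathbb{E}\bigl(\|D^k u^{(n)}(t,x)\|^p_{\mathcal H^{\otimes k}}\bigr)$ has the variables $\pmb r_k,\pmb z_k$ integrated out, while the right-hand side still depends on them. What your Gronwall argument actually produces is a pointwise bound on $D^k_{\pmb r_k,\pmb z_k}u^{(n)}(t,x)$ (of the shape of Corollary \ref{cor1}); to conclude you must then integrate that bound against the covariance to control the $\mathcal H^{\otimes k}$-norm, and it is at this integration step --- not only in the Gronwall iteration --- that Dalang's condition (\ref{Dalangscondition}) is needed. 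Second, for a general measure-valued $\Lambda$ the derivative $D^k u^{(n)}(t,x)$ is a priori only an $\mathcal H^{\otimes k}$-valued random variable, so writing the derivative equation pointwise in $(r,z)$ (as in your $k=1$ display) is not automatically licit; the paper devotes its Section 4 precisely to identifying the iterated derivative with a function. The clean way to run your induction is at the level of $\mathcal H^{\otimes k}$-valued processes (equivalently, via the Wiener-chaos route you mention at the end, which is closer in spirit to the paper's Proposition \ref{H^N}). Neither issue is fatal, but both must be addressed for the argument to be complete.
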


\begin{proof}
From Part (2) of  \cite[Proposion 3.2]{chen2019regularity} it follows that  $u(t,x)\in \mathbb{D}^{1,p}$ for all $(t,x) \in (0,\infty)\times \R^d$ and for all $p\ge 1$.
Because we are dealing with the parabolic Anderson model, the proof of Part (3) of  \cite[Proposion 3.2]{chen2019regularity} implies that $u(t,x)\in \mathbb{D}^{\infty}$ for all $(t,x) \in (0,\infty)\times \R^d$.
\end{proof}

\subsection{Brownian bridges}

Along the paper, $\{\widehat{B}_{[a,b]} ^{x,y}(s); s\in [a,b]\}$ will denote a  $d$-dimensional Brownian bridge in the time interval $[a,b]$ that goes from the starting point $x$ at time $a$  to the end point $y$ at time $b$.  We also set  $\widehat{B}_{[a,b]} := \widehat{B}_{[a,b]} ^{0,0}$.
We recall that  the Brownian  bridge $\widehat{B}_{[a,b]} ^{x,y}$ can be expressed as
\begin{equation} \label{BB}
\widehat{B}_{[a,b]} ^{x,y}(s)= \widehat{B}_{[a,b]} (s)+ \frac {s-a}{b-a} y + \frac {b-s}{b-a} x, \quad x,y \in \R^d.
\end{equation}

\section{Regularization of the Noise}
We will introduce the following  regularization of the noise $W$ in the space variable. For each $\epsilon>0$ and any 
$\varphi \in \mathscr{S}(\R_+\times \R^d)$, we define
\[
W^{\epsilon}(\varphi)= W(\varphi * p_{\epsilon}),
\]
where $*$ denotes the convolution in the space variable  and $p_{\epsilon}(x) $  is the $d$-dimensional heat kernel  defined in (\ref{p}).
Then, the Gaussian family 
$W^{\epsilon}=\left\{W^{\epsilon}(\varphi); \varphi \in \mathscr{S}\left(\R_+\times \R^d\right)\right\}$ has the covariance structure  
\begin{align*}
    \E\left(W^{\epsilon}(\varphi)W^{\epsilon}(\psi)\right)&=\int_{0}^{\infty}\int_{\mathbb{R}^{2d}} \left(\varphi(s,\cdot)*p_{\epsilon}(\cdot)\right)(y)\left(\psi(s,\cdot)*p_{\epsilon}(\cdot)\right)(y-y')\Lambda(dy')dyds \\ 
    &= \int_{0}^{\infty}\int_{\mathbb{R}^d} \mathscr{F}(\varphi)(s,\xi)\overline{\mathscr{F}(\psi)(s, \xi)}e^{-\epsilon |\xi|^2}\mu(d\xi)ds,
\end{align*}
that is, the  noise $W^\epsilon$ is white in time and it has a spatial covariance given by
\begin{equation}\label{lambdaepsilon}
 \Lambda_{\epsilon}(x)= \frac 1{ (2\pi)^d}\int_{\mathbb{R}^d} e^{ix\cdot \xi-\epsilon|\xi|^2}\mu(d\xi), 
 \end{equation}
whose Fourier transform is
$ \mu_{\epsilon}(d\xi)=  e^{-\epsilon|\xi|^2}\mu(d\xi) $.
Notice that  $ \mu_{\epsilon}$ is a finite measure and  $ \Lambda_{\epsilon}$ is a bounded smooth function.
In this way, we can write \begin{align*}
     \E\left(W^{\epsilon}(\varphi)W^{\epsilon}(\psi)\right)&=\int_0^{\infty}\int_{ \mathbb{R}^{2d}} \varphi(s,y)\psi(s,y')\Lambda_{\epsilon}(y-y')dydy'ds \\ &= \int_0^{\infty}\int_{ \mathbb{R}^{d}} \mathscr{F}(\varphi)(s,\xi)\overline{\mathscr{F}(\psi)(s, \xi)}\mu_{\epsilon}(d\xi)ds.
\end{align*}
As before, we denote by $\mathcal{H}^\epsilon$ the completion of  $\mathscr{S}(\R_+\times \mathbb{R}^d)$ under the inner product $$\langle \varphi, \psi \rangle_{\mathcal{H}^\epsilon}=   \E\left(W^{\epsilon}(\varphi)W^{\epsilon}(\psi)\right).$$
\vskip 10pt

For a fixed $t \in \R_{\geq 0}$, let $f^t:[0,t]\to \R^d $ be a continuous function. Then, the map $(s,y) \mapsto  \mathbb{1}_{[0,t]}(s)p_{\epsilon}\left(f^t(s)-y\right)$ belongs to the space $\mathcal{H}$ since
 \begin{align}  \notag
\|\mathbb{1}_{[0,t]}(\bullet)p_{\epsilon}\left(f^t(\bullet)-\star\right)\|^2_{\mathcal{H}} &= \int_0^{t} \int_{\R^{2d}} p_{\epsilon}\left(f^t(s)-y\right)p_{\epsilon}\left(f^t(s)-y'+y\right)\Lambda(dy')dyds \\ \notag
&= 
\int_{0}^{t} \int_{\R^d}e^{-\epsilon|\xi|^2}\mu(d\xi)ds=t \int_{\R^d}e^{-\epsilon|\xi|^2}\mu(d\xi) \\
&=t(2\pi)^d \Lambda _\epsilon(0)  <\infty   \label{lambdazero}
\end{align}
and we can define the stochastic integral 
\[
 W\left( \mathbb{1}_{[0,t]}(\bullet)p_{\epsilon}\left(f^t(\bullet)-\star\right)\right)
 =\int_0^t \int_{\mathbb{R}^d} p_{\epsilon}(f^t(s)-y)W(ds,dy).
\] 
 Throughout, we will use the following notation: 
\[
\int_0^t W^{\epsilon}(ds,f^t(s)):=\int_0^t \int_{\mathbb{R}^d} p_{\epsilon}(f^t(s)-y)W(ds,dy).
\]
From (\ref{lambdazero}) it follows that  $\int_0^t W^{\epsilon}(ds,f^t(s))$ is a centered Gaussian random variable with variance  $t(2\pi)^d \Lambda _\epsilon(0)$.

The following result will play an important role in  the proof of our main result.

\begin{proposition} \label{basic}
Fix an integer $k\ge 2$.
Let   $B^j_{[a,b]}$, $j=1, \dots, k$ be  independent   $d$-dimensional Brownian bridges in $[a,b]$ from $0$ to $0$, where $[a,b] \subset [0,t]$.  Consider  a
measurable  function 
$\alpha= \left(\alpha^{j,l}\right)_{1\le j < l \le k}: [a,b] \to \R^{k(k-1)/2}$. For each $1\le j<l \le k$ we set
\[
G^{j,l}_\e:=\int_a^b \Lambda_\e(B^j_{[a,b]}(s)- B^l_{[a,b]}(s) + \alpha^{j,l} (s)) ds.
\]
Then the following results hold true:

\medskip
\noindent
{\it (i)} For each $\kappa \in \R$,
\begin{equation} \label{kappa1}
\sup_{\e \in (0,1]}  \sup_\alpha \E \left( \exp \left( \kappa \sum_{1\le j < l \le k} G^{j,l}_\e \right) \right)
 = K_{t,\kappa} <\infty,
\end{equation}
where the constant   $K_{t,\kappa} $ only depends on $t$ and $\kappa$.

 \medskip
\noindent
{\it (ii)}  For each $1\le j<l \le k$, the random variables   $
G^{j,l}_\e$
  converge   in $L^p(\Omega)$ for all $p\ge 2$, as $\e \downarrow 0$,  to a limit denoted by  $G^{j,l}:=\int_a^b \Lambda(B^j_{[a,b]}(s)- B^l_{[a,b]}(s) + \alpha^{j,l} (s)) ds$.

\medskip
\noindent
{\it (iii)} We have, for all $\kappa \in \R$,
\[
  \lim _{ \e \downarrow 0}  \E \left( \exp \left( \kappa   \sum_{1\le j < l \le k}   G^{j,l}_\e  \right)\right)
=
  \E \left( \exp \left( \kappa   \sum_{1\le j < l \le k}  G^{j,l} \right)\right),
\]
where the convergence is uniform in $\alpha$ and in $a,b$.
\end{proposition}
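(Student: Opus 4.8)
The plan is to treat part (i) as the heart of the matter and to deduce (ii) and (iii) from it together with a Fourier/dominated-convergence argument. I begin with the reductions for (i). Since $\mu_\e=e^{-\e|\xi|^2}\mu$ and $e^{-\e|\xi|^2}=\mathscr F p_{2\e}$, we have $\Lambda_\e=\Lambda*p_{2\e}$, so $\Lambda_\e\ge 0$ pointwise (convolution of the nonnegative measure $\Lambda$ with a Gaussian) and hence $G^{j,l}_\e\ge 0$. Thus for $\kappa\le 0$ the exponential is $\le 1$ and the bound is trivial; I assume $\kappa>0$. By the generalized Hölder inequality with $M=\binom k2$ equal exponents,
\[
\E\Big(\exp\big(\kappa\textstyle\sum_{1\le j<l\le k}G^{j,l}_\e\big)\Big)\le\prod_{1\le j<l\le k}\Big(\E\exp\big(\kappa M\,G^{j,l}_\e\big)\Big)^{1/M},
\]
so it suffices to bound a single factor. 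As $B^j_{[a,b]}-B^l_{[a,b]}$ is distributed as $\sqrt2\,\widehat B_{[a,b]}$, each factor equals $\E\exp\!\big(\kappa M\int_a^b\Lambda_\e(\sqrt2\,\widehat B_{[a,b]}(s)+\alpha^{j,l}(s))\,ds\big)$, which I must bound uniformly in $\e\in(0,1]$, in the shift $\alpha^{j,l}$, and in $[a,b]\subseteq[0,t]$.

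Write $\psi(u):=\int_{\R^d}e^{-\frac u2|\xi|^2}\mu(d\xi)$. Expanding the exponential and using $\E[(\int_a^b f)^n]=n!\int_{a<s_1<\cdots<s_n<b}\E[\prod_i f(s_i)]\,d\mathbf s$ for $f\ge 0$, I represent each mixed expectation by Fourier: $\E[\prod_i\Lambda_\e(\sqrt2\,\widehat B(s_i)+\alpha(s_i))]=(2\pi)^{-nd}\int\prod_i e^{i\alpha(s_i)\cdot\xi_i}\,\E\,e^{i\sqrt2\sum_i\xi_i\cdot\widehat B(s_i)}\prod_i\mu_\e(d\xi_i)$. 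Bounding $|e^{i\alpha\cdot\xi}|\le1$ and $\mu_\e\le\mu$ removes the dependence on $\alpha$ and on $\e$. Then I use the Markov decomposition of the bridge, $\widehat B(s_m)=\frac{b-s_m}{b-s_{m-1}}\widehat B(s_{m-1})+\zeta_m$ with independent $\zeta_m\sim N(0,\sigma_m^2 I_d)$, $\sigma_m^2=\frac{(s_m-s_{m-1})(b-s_m)}{b-s_{m-1}}$ ($s_0=a$), which makes the Gaussian exponent triangular in $\xi_1,\dots,\xi_n$. Integrating the $\xi_m$ successively and applying the positivity lemma
\[
\int_{\R^d}e^{-\sigma^2|\xi-\eta|^2}\mu(d\xi)=(\pi/\sigma^2)^{d/2}\!\!\int_{\R^d}e^{-i\eta\cdot y}e^{-|y|^2/(4\sigma^2)}\Lambda(dy)\ \le\ \psi(2\sigma^2),
\]
(valid because $\Lambda\ge0$, via the duality $\int f\,d\mu=\int\mathscr F f\,d\Lambda$), I obtain the product bound $\E[\prod_i\Lambda_\e(\sqrt2\widehat B(s_i)+\alpha(s_i))]\le(2\pi)^{-nd}\prod_{m=1}^n\psi(2\sigma_m^2)$, and therefore $\E[(\int_a^b f)^n]\le n!\,(2\pi)^{-nd}\int_{a<s_1<\cdots<s_n<b}\prod_m\psi(2\sigma_m^2)\,d\mathbf s$.

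\emph{Short intervals and the main obstacle.} For the simplex integral $S_n$ I integrate out the largest time $s_n$. Using $\sigma_n^2\ge\frac12\min(s_n-s_{n-1},b-s_n)$ and the monotonicity of $\psi$, $\int_{s_{n-1}}^b\psi(2\sigma_n^2)\,ds_n\le 2\int_0^{b-a}\psi(v)\,dv=:\beta_{b-a}$, uniformly in the remaining variables; since the conditional variances of the lower points do not involve $s_n$, this yields the recursion $S_n\le\beta_{b-a}S_{n-1}$, hence $S_n\le\beta_{b-a}^n$. By Fubini, $\int_0^\delta\psi(v)\,dv=\int_{\R^d}\frac{2(1-e^{-\delta|\xi|^2/2})}{|\xi|^2}\,\mu(d\xi)\to0$ as $\delta\downarrow0$ by dominated convergence and Dalang's condition (\ref{Dalangscondition}). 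I therefore fix $\delta$ so small that $\kappa M(2\pi)^{-d}\beta_\delta<1$; then for any interval of length $\le\delta$ the exponential series $\sum_n(\kappa M(2\pi)^{-d}\beta_\delta)^n$ converges, bounding the short-interval exponential moment by a constant $C_0$ uniform in the shift and in $\e$. To pass to an arbitrary $[a,b]\subseteq[0,t]$ I subdivide it into $\lceil t/\delta\rceil$ intervals of length $\le\delta$ and condition on the values of the bridge at the subdivision points: by the Markov property the segments become independent bridges whose affine endpoints are absorbed into the shift through (\ref{BB}), so each conditional exponential moment is $\le C_0$, and multiplying gives $K_{t,\kappa}=C_0^{\lceil t/\delta\rceil}$, uniform in $\e,\alpha$ and $[a,b]$. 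This Markov-subdivision step, which lets the crude geometric estimate on short intervals suffice for \emph{all} $\kappa$, is the crux of the proof; the rest is bookkeeping.

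For (ii) I show $\{G^{j,l}_\e\}_\e$ is Cauchy in $L^2$: expanding $\E[(G^{j,l}_\e-G^{j,l}_{\e'})^2]$ and writing each second moment by the Fourier formula above, the replacement of $\mu_\e,\mu_{\e'}$ by $\mu$ is justified by dominated convergence, the dominating function being the integrable bound $(2\pi)^{-2d}\psi(2\sigma_1^2)\psi(2\sigma_2^2)$ over the $2$-simplex (finite by the estimate just proved). Hence $G^{j,l}_\e\to G^{j,l}$ in $L^2$, which defines $G^{j,l}$. Because (i) bounds all exponential moments of $G^{j,l}_\e$ uniformly in $\e$, all $L^q$ norms are uniformly bounded, so $\{|G^{j,l}_\e-G^{j,l}_{\e'}|^p\}$ is uniformly integrable and the $L^2$-convergence upgrades to $L^p$ for every $p\ge2$. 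Finally, for (iii), set $X_\e:=\sum_{j<l}G^{j,l}_\e\to X:=\sum_{j<l}G^{j,l}$ in $L^2$ by (ii); using $|e^a-e^b|\le|a-b|(e^a+e^b)$, Cauchy--Schwarz, and (i) with parameter $2\kappa$,
\[
\big|\E\,e^{\kappa X_\e}-\E\,e^{\kappa X}\big|\le|\kappa|\,\|X_\e-X\|_2\,\big\|e^{\kappa X_\e}+e^{\kappa X}\big\|_2\le 2|\kappa|\,K_{t,2\kappa}^{1/2}\,\|X_\e-X\|_2,
\]
and since the $L^2$-estimate underlying (ii) is uniform in $\alpha$ and in $a,b$, so is the resulting convergence, giving the uniform statement in (iii).
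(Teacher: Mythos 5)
Your proof is correct, but it does far more work than the paper does: the paper's entire proof of Proposition \ref{basic} is a citation of \cite[Lemma 4.1, Propositions 4.2 and 4.3]{huang2017large}, together with the remark that the extension from Brownian motions and constant shifts $\alpha^{j,l}(s)=x^j-x^l$ to bridges and general measurable $\alpha$ ``can be done in the same way.'' You instead give a self-contained argument, and its ingredients check out: the reduction to a single pair by generalized H\"older and the identity $B^j-B^l\overset{d}{=}\sqrt2\,\widehat B$; the Fourier representation of the mixed moments together with the observation that, because $\Lambda\ge0$, the shift $\eta$ in $\int e^{-\sigma^2|\xi-\eta|^2}\mu(d\xi)$ can be discarded (this is exactly the ``supremum attained at $\eta=0$'' step the paper itself uses in the proof of Proposition \ref{H^N}); the Markov decomposition of the bridge, which makes the Gaussian quadratic form triangular so that the $\xi_i$ integrate out one at a time to give $\prod_m\psi(2\sigma_m^2)$; and the verification via Dalang's condition (\ref{Dalangscondition}) that $\beta_\delta\to0$. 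The genuine methodological difference is in how the exponential series is summed for arbitrary $\kappa$: the cited reference (and the paper's own Proposition \ref{H^N}, via the constants $C_M,D_M$ and \cite[Lemma 3.3]{hu2015stochastic}) splits the spectral measure at a large radius $M$, whereas you subdivide $[a,b]$ into intervals of length $\delta$ chosen so that $\kappa M(2\pi)^{-d}\beta_\delta<1$ and use the Markov property plus (\ref{BB}) to absorb the random endpoints of each segment into the shift --- which is precisely where the uniformity of your short-interval bound in $\alpha$ pays off. Your route buys a proof that works verbatim for bridges and arbitrary measurable shifts (the extension the paper leaves to the reader), at the cost of a constant of the form $C_0^{\lceil t/\delta\rceil}$ rather than one obtained from a single convergent series; the deductions of (ii) and (iii) from (i) by dominated convergence, uniform integrability and the inequality $|e^a-e^b|\le|a-b|(e^a+e^b)$ are standard and correctly executed, including the uniformity in $\alpha$ and $[a,b]$ claimed in (iii).
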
 

\begin{proof}
Property  (\ref{kappa1}) has been proved in \cite[Lemma 4.1]{huang2017large}. The convergence in  point (iii), as $\e $ tends to zero, follows from \cite[Proposition 4.2]{huang2017large}. Actually, in Proposition 4.2 of \cite{huang2017large}, the result is proved for  $\alpha^{j,l} (s)= x^j-x^l$, where $x^j \in \R^d$, $j=1,\dots, k$, but the case of a general function $\alpha$ can be done in the same way. Property (ii) is proved in  Proposition 4.3 of \cite{huang2017large} for Brownian motions and with $\alpha^{j,l} (s)= x^j-x^l$ and  the arguments  of the proof   are  also valid for   Brownian  bridges   and for a general function $\alpha$.
\end{proof}

Now, we consider the heat equation driven by $W^{\epsilon}$,
\begin{equation}\label{SHEepsilon}
  \frac{\partial u}{\partial t}= \frac{1}{2}\Delta u+ u \Dot{W}^{\epsilon}, \qquad x\in \R^d,  \,\, t \in \R_+,
\end{equation}
with initial condition $u(0,x)=u_0$ where $u_0$  is a signed Borel measure satisfying (\ref{initial}). 
 An adapted  and jointly measurable random field $u^{\epsilon}=\{u^{\epsilon}(t,x); (t,x) \in (0,\infty) \times \mathbb{R}^d\}$ such that $\E\left(u^{\epsilon}(t,x) \right)^2 <\infty$ for all $(t,x)\in (0,\infty)\times \mathbb{R}^d$ is a  mild solution to  equation (\ref{SHEepsilon}), if for any $(t,x)\in (0,\infty)\times \mathbb{R}^d$, the process $\{p_{t-s}(x-y)u^{\epsilon}(s,y)\mathbb{1}_{[0,t]}(s); (s,y)\in (0,\infty)\times \mathbb{R}^d\}$ is integrable with respect to $W^\epsilon$, and the following holds:
\begin{equation} \label{ecu1}
    u^{\epsilon}(t,x)=(p_t * u_0)(x)+\int_{0}^t\int_{ \mathbb{R}^d} p_{t-s}(x-y)u^{\epsilon}(s,y) W^{\epsilon}(ds,dy).
\end{equation}
It follows from the general theory that this mild solution exists and it is unique. Furthermore, because the spectral measure is finite, there is a Feynman-Kac representation of the solution, given in the following lemma. For the sake of completeness we include a proof of the lemma, based on a duality argument.

\begin{lemma} \label{lem1} For each $\epsilon>0$, the following random field $u^{\epsilon}(t,x)$ is the solution to the heat equation given in (\ref{SHEepsilon}):
\begin{align}\label{epsilonsolution}
    u^{\epsilon}(t,x)=\E^{B}\left( u_0(B^x_t)\exp \left(\int_0^t W^{\epsilon}\left(ds,B_{t-s}^x\right)-\frac{1}{2}t (2\pi)^d\Lambda_{\epsilon}(0)\right)\right),
\end{align}
where $B^x$ is a $d$-dimensional standard Brownian motion independent of $W$ that starts at $x$ and  $\E^B$ denotes the mathematical expectation with respect to $B^x$.
\end{lemma}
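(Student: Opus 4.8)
The plan is to verify the Feynman-Kac formula \eqref{epsilonsolution} by a duality argument, showing that the proposed random field satisfies the mild formulation \eqref{ecu1}. Because the spatial covariance $\Lambda_\epsilon$ is a bounded smooth function and the spectral measure $\mu_\epsilon$ is finite, the noise $W^\epsilon$ is much more regular than the original noise, so classical Feynman-Kac techniques apply. The strategy is as follows. First I would define $v^\epsilon(t,x)$ to be the right-hand side of \eqref{epsilonsolution} and check the integrability and measurability conditions required of a mild solution; the boundedness of $\Lambda_\epsilon(0)$ together with the Gaussian bound from \eqref{lambdazero} and the moment control in Proposition \ref{basic}(i) guarantee that $v^\epsilon(t,x)$ has finite second moment, using \eqref{initial} to handle the initial measure $u_0$.

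The core of the argument is the duality pairing. Rather than differentiating the exponential functional directly, I would test both sides against a smooth deterministic functional of the noise, or equivalently exploit the uniqueness of the mild solution already granted by the general theory. Concretely, one computes the Wiener chaos expansion (or the stochastic integral representation) of $v^\epsilon(t,x)$ by expanding the stochastic exponential $\exp\left(\int_0^t W^\epsilon(ds, B^x_{t-s}) - \tfrac12 t(2\pi)^d \Lambda_\epsilon(0)\right)$ as a Wick exponential, which has a clean chaos decomposition. Taking the conditional expectation $\E^B$ over the Brownian path and matching terms with the Picard iteration of \eqref{ecu1}, one sees that each chaos component of $v^\epsilon$ agrees with the corresponding component produced by iterating the integral equation. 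The Markov property of $B^x$ is what produces the convolution structure $p_{t-s}(x-y)$ in the stochastic integral term, since conditioning the Brownian motion at an intermediate time $s$ splits the path into an independent future and past.

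The key identity to establish is that, after applying the Markov property at time $s$ and using the translation invariance of the heat kernel, the derivative of the exponential functional in the time variable reproduces the two terms $\frac12 \Delta v^\epsilon$ and $v^\epsilon \dot W^\epsilon$ in a weak sense; the Wick-exponential normalization $-\tfrac12 t(2\pi)^d \Lambda_\epsilon(0)$ is precisely the Itô correction that cancels the quadratic variation coming from the martingale part, which is where the constant $(2\pi)^d \Lambda_\epsilon(0)$ computed in \eqref{lambdazero} enters. I would then invoke the uniqueness of the mild solution to \eqref{SHEepsilon} from the general theory to conclude that $v^\epsilon = u^\epsilon$.

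The main obstacle I anticipate is justifying the interchange of the Brownian expectation $\E^B$ with the stochastic integration against $W^\epsilon$, and more delicately, rigorously identifying the stochastic integral $\int_0^t W^\epsilon(ds, B^x_{t-s})$ as a genuine Walsh--Dalang integral whose conditional-expectation manipulations are valid. One must be careful that the integrand $p_{t-s}(x-y)$ paired with the Brownian functional is adapted and square-integrable with respect to $W^\epsilon$, and that Fubini-type exchanges between the independent sources of randomness $B^x$ and $W$ hold. Once these measurability and integrability checks are in place, the chaos-matching computation is essentially routine given the finiteness of the spectral measure.
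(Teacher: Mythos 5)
Your strategy is sound and would yield a complete proof, but it is genuinely different from the paper's. The paper also starts from a duality pairing against Wick exponentials $G=e^{W(h)-\frac12\|h\|_{\mathcal H}^2}$, but instead of expanding anything in chaos it computes the Gaussian expectation $\E^W$ in closed form to get
$S_{t,x}(h)=\E(Gu^{\epsilon}(t,x))=\E^B\bigl(u_0(B^x_t)\exp\bigl(\int_0^t\langle p_{\epsilon}(B^x_{t-s}-\star),h(s,\star)\rangle_{\mathcal H_0}\,ds\bigr)\bigr)$,
recognizes this as the \emph{classical deterministic} Feynman--Kac representation of the heat equation with potential $\langle p_{\epsilon}(x-\star),h(t,\star)\rangle_{\mathcal H_0}$, writes the corresponding deterministic integral equation for $S_{t,x}(h)$, and then uses $DG=hG$ together with the adjoint relation between the Walsh--Dalang integral and the Malliavin derivative to convert that identity into the mild form \eqref{ecu1}. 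Your route instead expands the Wick exponential $\exp\bigl(\int_0^t W^{\epsilon}(ds,B^x_{t-s})-\frac12 t(2\pi)^d\Lambda_{\epsilon}(0)\bigr)=\sum_n\frac1{n!}I_n(g_B^{\otimes n})$ (the normalization is exactly $\frac12\|g_B\|_{\mathcal H}^2$ by \eqref{lambdazero}), integrates out $B$ using the Markov property to identify the $n$-th chaos kernel, and matches it with the kernel produced by Picard iteration of \eqref{ecu1}, finishing by uniqueness. Both work; the paper's version sidesteps the interchange of $\E^B$ with the infinite chaos series and term-by-term summability (your self-identified main obstacle, which is manageable here precisely because $\mu_{\epsilon}$ is finite), while your version has the side benefit of producing the explicit chaos expansion of $u^{\epsilon}(t,x)$ with kernels $f^{\epsilon}_{n,t,x}$, which the paper in fact needs separately in Section 4. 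Two caveats: your third paragraph, which argues heuristically that the time derivative of the exponential functional reproduces $\frac12\Delta v^{\epsilon}+v^{\epsilon}\dot W^{\epsilon}$ via an It\^o correction, is redundant once the chaos matching is done and should be dropped rather than left half-justified; and since $u_0$ may be a measure, the expression $u_0(B^x_t)$ must be interpreted by integrating against the bridge decomposition (as in Remark 1 of the paper) before any of your manipulations --- this affects the computation of the chaos kernels, where $u_0(d\theta)$ enters through the endpoint distribution rather than as a pointwise factor.
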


\medskip
\noindent
 {\bf Remark 1.} Notice that,  because $u_0$ is a signed measure, the composition $u_0(B^x_t)$ is not well defined. The right-hand side of equation
 (\ref{epsilonsolution}),  can be interpreted in two ways:
 
 \smallskip
 \noindent
 (i) We can write 
 \[
  u^{\epsilon}(t,x)=  \int_{\R^d}  u_0 (d\theta) p_t(x-\theta)) \E^{\widehat{B} }\left(  \exp \left(\int_0^t   \int_{\R^d} 
  p_{\e} (\widehat{B}^{\theta,x}_{0,t} (s)-y ) W(ds,dy)  -\frac 12 t(2\pi)^d\Lambda_{\epsilon}(0)\right)\right),
  \]
  where $\{\widehat{B}^{\theta,x}_{0,t}(s), s\in [0,t]\}$ denotes a $d$-dimensional Brownian bridge in the interval $[0,t]$ from $\theta$ to $x$.
 The above integral is well defined almost surely because on one hand $\int_{\R^d}  |u_0| (d\theta) p_t(x-\theta)) <\infty$ and moreover,  from the computations in (\ref{Phi}), we have
 \[
  \E^W \E^{\widehat{B} }\left(  \exp \left(\int_0^t   \int_{\R^d} 
  p_{\e} (\widehat{B}^{\theta,x}_{0,t} (s)-y ) W(ds,dy) \right)\right) = e^{ \frac 12 t(2\pi)^d\Lambda_{\epsilon}(0)} .
  \]
  
  \smallskip
 \noindent
 (ii)  From the results in  \cite{CHN-19}  the random variable   $u_0(B^x_t)$ belongs to Meyer-Watanabe space $\mathbb{D}^{-\alpha, p}$ for  any $p>1$ and 
 $\alpha>1- \frac 1p$. Furthermore,  it can be proved that, conditionally to $W$,   the exponential term 
 $\mathcal{E}:=\exp \left(\int_0^t   \int_{\R^d}   p_{\e} (B_{t-s}^x-y ) W(ds,dy)\right)$ is in the space $\mathbb{D}^{1,2}$ as a functional of the Brownian motion $B$, with a derivative given by
 \[
 D_r \mathcal{E} =\mathcal{E}  \sum_{i=1}^d  \int_0^{t-r}   \int_{\R^d}   \frac {\partial p_\e} {\partial x_i} (B_{t-s}^x-y ) W(ds,dy).
 \]
 Then, the right-hand side of equation
 (\ref{epsilonsolution}) can be expressed as the following pairing
 \[
e^{ -\frac 12\Lambda_{\epsilon}(0)}
 \left \langle u_0(B^x_t), \mathcal{E} \right \rangle_{\mathbb{D}^{-1,2}, \mathbb{D}^{1,2} }.
 \]

\begin{proof}[Proof of Lemma \ref{lem1}]
Let $G \in L^2(\Omega, \mathcal{F}, P)$ be such that $G=e^{W(h)-\frac{1}{2}\|h\|_{\mathcal{H}}^2}$ for some $h\in \mathcal{H}$.  From (\ref{epsilonsolution}), we obtain
\begin{align}
   \nonumber \E\left( Gu^{\epsilon}(t,x)\right)&= \E^W\left( GE^{B}\left( u_0(B^x_t) \exp{\left(\int_0^t W^{\epsilon}\left(ds,B_{t-s}^x\right)-\frac{1}{2}t(2\pi)^d\Lambda_{\epsilon}(0)\right)}\right)\right) \\
    \nonumber &=\E^B \left(u_0(B^x_t)\E^W \left(\exp{\left( W(h+p_{\epsilon}(B_{t-\bullet}^x-\star)) -\frac{1}{2}\|h\|^2_{\mathcal{H}}-\frac{1}{2}t(2\pi)^d \Lambda_{\epsilon}{(0)}\right)}\right)\right)\\ \nonumber 
    & =\E^B \left(u_0(B^x_t)\exp{\left(\frac{1}{2}\|h+p_{\epsilon}(B_{t-\bullet}^x-\star)\|^2_{\mathcal{H}} -\frac{1}{2}\|h\|^2_{\mathcal{H}}-\frac{1}{2}t(2\pi)^d\Lambda_{\epsilon}{(0)}\right)}\right)  \\ \nonumber 
    &= \E^B \left(u_0(B^x_t)\exp{\left( \big\langle p_{\epsilon}(B_{t-\bullet}^x-\star) , h \big\rangle^2_{\mathcal{H}}\right)}\right) \\ \nonumber 
    &= \E^B \left(u_0(B^x_t)\exp{\left( \int_0^t \big\langle p_{\epsilon}(B_{t-s}^x-\star) , h(s,\star) \big\rangle_{\mathcal{H}_0}ds\right)}\right) .
\end{align}
Letting $S_{t,x}(h)=\E^W(Gu^{\epsilon}(t,x))$,   by the classical Feynmann-Kac's formula, the above calculation shows that $S_{t,x}(h)$ satisfies the classical heat equation with   potential $\langle p_{\epsilon}(x-\star) , h(s,\star) \big\rangle_{\mathcal{H}_0}$,  and initial condition $u_0$, i.e.
\begin{align}
    \nonumber \frac{\partial S_{t,x}(h)}{\partial t} =\frac{1}{2}\Delta S_{t,x}(h)+S_{t,x}(h)\langle p_{\epsilon}(x-\star) , h (t,\star) \big\rangle_{\mathcal{H}_0}.
\end{align}
 As a consequence, we have
  \begin{align}
    \nonumber S_{t,x}(h)&=(p_t *u_0)(x)+ \int_0^{t}\int_{ \mathbb{R}^{d}}p_{t-s}(x-y)S_{s,y}(h)\langle p_{\epsilon}(y-\star) , h (s,\star)\rangle_{\mathcal{H}_0}dsdy \\ \nonumber 
    &=(p_t *u_0)(x)+ \int_0^{t}\int_{ \mathbb{R}^{d}}p_{t-s}(x-y)\E\left(u^{\epsilon}_{s,y}\langle p_{\epsilon}(y-\star) ,D_{s,\star}G \rangle_{\mathcal{H}_0}\right)dsdy,
\end{align}
where we used $DG=hG$.  In conclusion, we have proved that
\[
    \E\left( Gu^{\epsilon}(t,x)\right)= (p_t *u_0)(x)+\E \left(\left\langle  \mathbb{1}_{[0,t]} (\bullet) \int_{ \mathbb{R}^d} p_{t-\bullet}(x-y)p_{\epsilon}(y-\star)u^\e(\bullet,y) dy, DG\right\rangle_{\mathcal{H}}\right).
\]
By the fact that the Dalang-Walsh stochastic integral is the adjoint of the Malliavin derivative, we deduce that
\[
    u^{\epsilon}(t,x)=(p_t *u_0)(x)+\int_{0}^t\int_{ \mathbb{R}^d}  \left( \int_{\R^d} p_{t-s}(x-y) p_\epsilon(y-z)u^{\epsilon}(s,y)  dy  \right)W(ds,dz),
\]
which implies equation  (\ref{ecu1}).
\end{proof}

In the next theorem we show that $u^\epsilon(t,x)$ converges to the solution $u(t,x)$ of the stochastic heat equation (\ref{PAM}) in $L^p(\Omega)$ for all $p\ge 1$,  and, as a consequence, we derive a Feynman-Kac formula for the moments of the solution. This type of Feynman-Kac formula  has been established in the  literature under different conditions (see, for instance, \cite[Theorem 3.6]{hu2015stochastic} for the case where $\Lambda$ is a function and there is also a correlation in time, or \cite{HN}      when the noise is white in space and a fractional Brownian motion with Hurst parameter $H>1/2$ in time)
assuming that $u_0$ is a bounded function. We will give here a detailed proof based on the approximation scheme $u^\epsilon(t,x)$, because the necessary computations  will be also used in the proof of Theorem \ref{thm1}.

\begin{proposition} \label{FeynmanKacTheorem}  Let $u^\epsilon$ be the solution to equation (\ref{ecu1}) with an initial condition $u_0$ satisfying (\ref{initial}). Then, for any $k \geq 1$, we have
 \begin{equation}  \label{ecu3}
    \sup_{\epsilon>0} \E \left( \left| u^{\epsilon}(t,x)\right|^k\right) <\infty
\end{equation}
and the following convergence holds in $L^p(\Omega)$ for any $p \geq 1$:
\begin{align}
    \lim_{\epsilon \to 0} u^{\epsilon}(t,x)=u(t,x),
\end{align} 
where  $u$ is the solution to the stochastic heat equation (\ref{PAM}) with initial condition $u_0$.  Furthermore, for any integer $k \geq 2$, the following Feynmann-Kac formula holds: 
\begin{align} \label{FeynmanKac}
\E\left[u^k(t,x) \right]=\E\left( \prod_{j=1} ^k u_0(B^{j,x}_t) \exp{\left( \sum\limits_{1\leq j <l\leq k}\int_0^t \Lambda(B_{s}^j- B_{s}^l)ds\right)} \right),
\end{align}
where $B=\{B^j\}_{j=1,\dots ,k}$ is an independent  family of $d$-dimensional standard Brownian motions and the integrals 
$\displaystyle\int_0^t \Lambda(B_s^{j}- B_s^l)ds$ are defined  according to
 Proposition \ref{basic} (ii). 
\end{proposition}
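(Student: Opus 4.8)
The plan is to compute the moments of the regularized solution $u^\epsilon(t,x)$ explicitly from the representation in Lemma \ref{lem1}, and then to let $\epsilon\downarrow0$. First I would raise (\ref{epsilonsolution}) to the $k$-th power by introducing $k$ independent copies $B^{1,x},\dots,B^{k,x}$ of the Brownian motion, all driven by the same noise $W$, and then take the expectation $\E^W$. Conditionally on the Brownian motions the exponent is a centered Gaussian, so $\E^W$ of its exponential equals the exponential of half its variance, namely $\tfrac12\sum_{j,l}\langle p_\epsilon(B^{j,x}_{t-\bullet}-\star),p_\epsilon(B^{l,x}_{t-\bullet}-\star)\rangle_{\mathcal{H}}$. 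The $k$ diagonal terms each equal the normalization $t(2\pi)^d\Lambda_\epsilon(0)$ of (\ref{lambdazero}) and so cancel the $-\tfrac{k}{2}t(2\pi)^d\Lambda_\epsilon(0)$ coming from the $k$ factors, while for $j\neq l$ the same convolution-of-heat-kernels computation as in Section 3 produces $\int_0^t\Lambda_\epsilon(B^{j,x}_{t-s}-B^{l,x}_{t-s})\,ds$. After the change of variables $s\mapsto t-s$, which leaves the endpoint factors untouched and cancels the common starting point inside each difference, this yields the exact analogue of (\ref{FeynmanKac}) with $\Lambda$ replaced by the smooth density $\Lambda_\epsilon$:
\[
\E\left[\left(u^\epsilon(t,x)\right)^k\right]=\E\left(\prod_{j=1}^k u_0(B^{j,x}_t)\exp\left(\sum_{1\le j<l\le k}\int_0^t\Lambda_\epsilon(B^j_s-B^l_s)\,ds\right)\right).
\]

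Next I would deduce the uniform bound (\ref{ecu3}). Since $u_0$ is a signed measure I bound $|u^\epsilon(t,x)|$ by the same expression with $|u_0|$ in place of $u_0$, and disintegrate each Brownian motion over its terminal value: conditioning $B^{j,x}$ to equal $\theta^j$ at time $t$ turns it into a bridge from $x$ to $\theta^j$, integrated against $p_t(x-\theta^j)\,|u_0|(d\theta^j)$. By (\ref{BB}) the difference of two such conditioned bridges is the difference of two standard bridges from $0$ to $0$ plus a deterministic drift, which is precisely the configuration of Proposition \ref{basic}. Applying (\ref{kappa1}) with $\kappa=1$ bounds the bridge expectation by $K_{t,1}$ uniformly in $\epsilon$ and in the $\theta^j$'s, and the remaining factor is $\big((p_t*|u_0|)(x)\big)^k<\infty$ by (\ref{initial}), giving (\ref{ecu3}).

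For the convergence I would show that $\{u^\epsilon\}$ is Cauchy in $L^2(\Omega)$. Writing $\E[(u^\epsilon-u^{\epsilon'})^2]$ through the moment formula above, the two self-covariances again cancel their normalizations, and the three resulting terms reduce (after the same endpoint disintegration) to exponential functionals of $\Lambda_\epsilon$, $\Lambda_{\epsilon'}$ and a mixed regularization $\Lambda_{(\epsilon+\epsilon')/2}$, all evaluated on one and the same bridge difference; by Proposition \ref{basic}(iii) these converge to a common limit, so the difference tends to $0$ and $u^\epsilon\to v$ in $L^2$ for some $v$. Combined with the uniform bound (\ref{ecu3}), uniform integrability upgrades this to $L^p$ convergence for every $p\ge1$. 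Finally, to obtain (\ref{FeynmanKac}) I pass to the limit in the $\Lambda_\epsilon$-formula: its left-hand side converges by the $L^k$-convergence just established, while its right-hand side converges by Proposition \ref{basic}(iii), the limiting integrals $\int_0^t\Lambda(B^j_s-B^l_s)\,ds$ being exactly the $L^2$-limits supplied by Proposition \ref{basic}(ii).

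I expect the main obstacle to be the identification of the $L^2$-limit $v$ with the mild solution $u$ of (\ref{PAM}). The natural route is to pass to the limit in the mild equation (\ref{ecu1}), rewritten as in the proof of Lemma \ref{lem1} with the kernel $\int p_{t-s}(x-y)p_\epsilon(y-z)u^\epsilon(s,y)\,dy$ integrated against $W(ds,dz)$; one must show that these integrands converge in $L^2(\Omega;\mathcal{H})$ to $p_{t-s}(x-z)v(s,z)$, so that the Dalang--Walsh integrals converge and $v$ satisfies (\ref{integralform}), whence $v=u$ by uniqueness. (Equivalently, one may match the Wiener chaos expansions of $u^\epsilon$ and $u$ term by term, with the uniform bound controlling the tails.) The accompanying subtleties are the measure-valued initial datum $u_0$ --- handled throughout by the disintegration of Remark 1 --- and the fact that the $\Lambda$-functionals exist only as the $L^2$-limits of Proposition \ref{basic}(ii).
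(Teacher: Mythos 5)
Your proposal follows essentially the same route as the paper's proof: the $k$-th moment of $u^{\epsilon}$ is computed from Lemma \ref{lem1} by the Gaussian calculation with cancellation of the normalization (the paper obtains $\Lambda_{2\epsilon}$ rather than $\Lambda_{\epsilon}$, since $p_{\epsilon}*p_{\epsilon}=p_{2\epsilon}$, but this is immaterial), the uniform bound \eqref{ecu3} and the $L^2$-Cauchy property follow from the disintegration over terminal points into Brownian bridges together with Proposition \ref{basic}, and the limit formula \eqref{FeynmanKac} is obtained via Proposition \ref{basic}(iii). The only place you diverge is the identification $v=u$, which the paper settles by the duality argument already prepared in the proof of Lemma \ref{lem1} (testing against $G=e^{W(h)-\frac{1}{2}\|h\|_{\mathcal{H}}^{2}}$ and letting $\epsilon\to 0$) rather than by passing to the limit directly in the Walsh integral; both routes reach the same conclusion through uniqueness of the mild solution.
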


\begin{proof}
Set   $\Psi^k_{t,x} =:\prod_{j=1} ^k u_0(B^{j,x}_t)$.
Using Lemma \ref{lem1}, we have
 \begin{align}\label{momentcalculation}\
        \E\left[\left(u^{\epsilon}(t,x)\right)^k\right] =\E^{W}\E^B \left(\Psi^k_{t,x} \exp \left(\sum\limits_{j=1}^k \int_0^t W^{\epsilon} (ds, B^{j,x}_{t-s})-\frac{1}{2}t(2\pi)^d\Lambda_{\epsilon}(0)\right)\right),
    \end{align}
    where $B=\{B^j\}_{j=1,\dots ,k}$ is a family of $d$-dimensional independent standard Brownian motions independent of $W$ and $B^{j,x}=B^j+x$. 
    Here again  the expectation in  (\ref{momentcalculation}) has to be understood as in Remark 1. 
Changing the order of the expectations, yields
    \begin{align} \nonumber
        &\E  \left[ \left(u^{\epsilon}(t,x)\right)^k  \right]\\\nonumber 
        &=\E^{B}\left(\Psi^k_{t,x} \E^W \left( \exp\left(\sum\limits_{j=1}^k \int_0^{t}\int_{ \mathbb{R}^{d}}p_{\epsilon}(B^{j,x}_{t-s}-y)W(ds,dy)-\frac{1}{2}t(2\pi)^d\Lambda_{\epsilon}(0)\right)\right)\right) \\
        \nonumber &= \E \left(\Psi^k_{t,x} \exp\left(\frac{1}{2}\sum\limits_{j,l=1, j\neq l}^k \int_0^{t}\int_{ \mathbb{R}^{2d}}p_{\epsilon}(B^{j,x}_{t-s}-y)p_{\epsilon}(B^{l,x}_{t-s}-y+y')\Lambda(dy')dyds\right)\right)       \\
        \label{ecu4} &= \E \left( \Psi^k_{t,x}\exp\left( \sum\limits_{ 1 \le j< l\le k} \int_0^{t}   \Lambda_{2\e} (B^{j,x}_s - B^{l,x}_s) ds\right)\right).
    \end{align}
   Integrating with respect to the law of the random vector $(B^1_t+x, \dots, B^k_t+x)$ whose density is $\theta \mapsto \prod _{j=1}^k p_t(x-\theta_j)$, the above expectation can be written as follows
    \begin{align*}
 \E \left[ \left(u^{\epsilon}(t,x)\right)^k  \right] &=
    \int_{\R^{kd}}  \prod_{j=1}^k u_0(d\theta_j) p_t(x-\theta_j) \\
    & \quad \times    \E \left( \exp\left( \sum\limits_{ 1 \le j<l\le k} \int_0^{t}   \Lambda_{2\e} 
    (\widehat{B}^{j,x,\theta_j}_{0,t}(s)  - \widehat{B}^{l,x,\theta_l}_{0,t}(s) ) ds\right)\right),
    \end{align*}
    where $\left\{\widehat{B}_{0,t}^{j, \theta_j,x}, j=1,\dots k\right\}$ denote a family of $d$-dimensional Brownian bridges in the interval $[0,t]$ from $x$ to $\theta_j$.
    Now, using the expression (\ref{BB}) for Brownian  bridges, we can write
      \begin{align} \notag
    \E \left[ \left(u^{\epsilon}(t,x)\right)^k  \right] &=
    \int_{\R^{kd}}  \prod_{j=1}^k u_0(d\theta_j) p_t(x-\theta_j) \\
    & \quad \times  \E \left( \exp\left( \sum\limits_{ 1 \le j<l\le k} \int_0^{t}    \Lambda_{2\e} 
    \left(\widehat{B}^{j}_{0,t}(s)  - \widehat{B}^{l}_{0,t}(s)     +\frac { s (\theta_j-\theta_l)}t \right) ds\right)\right).  \label{dn1}
    \end{align}
    
    Now we can proceed with the proof of the proposition. First,  we only need to show (\ref{ecu3}) when $k$ is even. In this case, (\ref{ecu3}) 
    follows from formula  (\ref{dn1}), condition (\ref{initial}) and  (\ref{kappa1}). Indeed,
    we have
    \[
    \E \left[ \left(u^{\epsilon}(t,x)\right)^k  \right] \le  c_t  \left( \int_{\R^{d}}   |u_0|(d\theta) p_t(x-\theta) \right)^k<\infty,
    \]
    where $c_t$ is a finite constant only depending on $t$.

    We claim that  $u^\e(t,x) $ converges in   $L^p(\Omega)$ as $\epsilon \to 0$,
    for all $p\ge 2$.
    Indeed,
    \begin{align*}
    \E\left(u^{\e_1}(t,x)u^{\e_2}(t,x)\right) & = \int_{\R^{2d}} \prod_{j=1}^2 u_0(d\theta_j) p_t(x-\theta_j)\\
    & \quad \times  \E \left( \exp\left( \int_0^t   \Lambda_{\e_1 + \e_2} 
    \left(\widehat{B}^1_{0,t}(s)  - \widehat{B}^{2}_{0,t}(s)     +\frac {s (\theta_1-\theta_2)}t \right) ds\right) \right)
    \end{align*}
converges, as $\epsilon_1 ,\epsilon_2$ tend to $0$, to
\[
\int_{\R^{2d}} \prod_{j=1}^2 u_0(d\theta_j) p_t(x-\theta_j)  \E \left( \exp\left( \int_0^t   \Lambda
    \left(\widehat{B}^1_{0,t}(s)  - \widehat{B}^{2}_{0,t}(s)     +\frac {s (\theta_1-\theta_2)}t \right) ds\right) \right)
    \]
    thanks to Proposition \ref{basic}.
 Therefore, this shows the convergence of $u^{\epsilon}(t,x)$  in   $L^2(\Omega)$ as $\epsilon \to 0$ to  some limit $v(t,x)$. The fact that the convergence is in $L^p(\Omega)$ follows from (\ref{ecu4}) and  Proposition \ref{basic} (i). Taking the limit in (\ref{ecu4}) as $\e$ tends to zero, and using  Proposition \ref{basic} (iii), we obtain the Feynman-Kac formula  (\ref{FeynmanKac}) for the moments of $v(t,x)$.
    
    It remains to show that $v(t,x)$ coincides with the solution to equation (\ref{PAM}).
  By the proof of the Lemma \ref{lem1}, we know that for any random variable of the form $G=e^{W(h)-\frac{1}{2}\|h\|_{\mathcal{H}}^2}$ with $h\in \mathcal{H}$,  
   $u^{\epsilon}$ satisfies
\[
        \E \left(Gu^{\epsilon}(t,x)\right)= (p_t*u_0)(x)+\E \left( \left \langle \int_{[0,t]\times \mathbb{R}^d}p_{t-s}(x-y)u^{\epsilon}(s,y)p_{\epsilon}(x-\star),D_{s,\star}G \right \rangle_{\mathcal{H}_0}\right).
\]
    Now letting $\epsilon \to 0$, we see that
     \begin{align*}
       \E \left(G v(t,x)\right)=(p_t*u_0)(x)+\E \left( \langle vp_{t-\bullet}(x-\star),DG\rangle_{\mathcal{H}}\right),
    \end{align*}
    which implies that the process $v$ 
    is also a solution to the equation (\ref{PAM}), and by uniqueness $v=u$.
   \end{proof}

\section{Proof  Theorem \ref{thm1}}
We have that $u^\e(t,x)$ belongs to $\mathbb{D}^\infty$ for any $(t,x) \in (0,\infty) \times \R^d$. Moreover, we can compute its iterated Malliavin derivative
using the Feynman-Kac formula (\ref{epsilonsolution}):
For any integer  $N \ge 1$ and  for $0<r_1<\cdots<r_N<t$, $z_1,\dots z_N \in \R^d $,
\[
D^N_{\pmb{r}_N, \pmb{z}_N} u^{\epsilon}(t,x)=\E^B \left(u_0(B_t^x)\exp{\left(\int_0^t W^{\epsilon}(ds,B_{t-s}^x)-t(2\pi)^d\Lambda_{\epsilon}(0)\right)}\prod_{m=1}^Np_{\epsilon}(B_{t-r_m}^x-z_m) \right),
\]
where $B$ is a $d$-dimensional Brownian motion independent of $W$ and $B^x_t= B_t+x$.
 Set
\[
M_{k,\e}=\E \left(\left( D^N_{\pmb{r}_N,\pmb{z}_N} u^{\epsilon}(t,x)\right)^k\right)
\]
and \[
M_{k}=\E \left(\left( D^N_{\pmb{r}_N,\pmb{z}_N}u(t,x)\right)^k\right).
\]
After calculations similar to those in (\ref{ecu4}), we get 
\begin{align} \label{momentsofuepsilon}
 M_{k,\e} &= \E  \left( \Psi_{t,x} ^k\exp{\left(\sum_{1\leq j<l\leq k}\int_0^t\Lambda_{2\epsilon}(B_{s}^{j,x}-B_{s}^{l,x})ds \right) } \prod_{j=1}^k \prod_{m=1}^N p_{\epsilon}(B^{j,x}_{t-r_m}-z_m) \right),
\end{align}
where
$\Psi^k_{t,x} =:\prod_{j=1} ^k u_0(B^j_t+x)$ and the $B^j$, $j=1\dots, k$ are independent $d$-dimensional Brownian motions.
The  $k(N+1)d$-dimensional random vector $(B^j_{t-r_N}+x, \dots, B^j_{t-r_1}+x, B^j_t+x)_{ 1\le j \le k}$ has the probability density
\[
 \varphi (\pmb{\eta}, \pmb{\theta})= \prod_{j=1}^k p_{r_1} ( \eta^j_1 -\theta^j)  \left( \prod_{m=1}^{N-1} p_{r_{m+1}-r_m} (\eta^j_{m+1} - \eta^j_m) \right) p_{t-r_N}(x- \eta_N^j),
\]
with the convention  $r_{N+1}=t$, $r_0=0$.
In the above expression, $\eta^j_m$ denotes the value of the random variable $B^j_{t-m}+x$ for $j=1,\dots, k$ and $m=1,\dots, N$ and
$\theta^j$ denotes the value of $B_t^j+x$ for $j=1,\dots,k$ and we make use of the notation   $\pmb{\eta}=(\eta^j_m)_{1\le j \le k ,1\le m \le N} \in\R^{k(N+1)d}$
and $\pmb{\theta}= (\theta^1, \dots, \theta^k)\in \R^{kd}$.

Set  $ \pmb{\eta}^j_N=(\eta_N^j,\dots, \eta_1^j)$ and 
    $\pmb{t}- \pmb{r}_N=(t-r_N,\dots,t-r_1)$.
    Let  
    \begin{equation} \label{bridges}
    \left\{ \widehat{B}_{0, \pmb{t} -\pmb{r}_N, t }^{j,x, \pmb{\eta}^j_N, \theta^j }(s), s\in [0,t]\right\}, \qquad j=1,\dots, k 
    \end{equation}
     be  a  family of $k$ independent $d$-dimensional  Brownian motions,
starting at $x$ and pinned at times $t-r_N<\cdots < t-r_1<t $ to be equal to $\eta^j_N ,\dots, \eta^j_1,\theta^j$, respectively.

Now, conditioning on  $B^j_{t-r_m}+x=\eta_m^j$  and  $B^j_t+x =\theta^j$ for $m=1, \dots, N$ and $j=1,\dots, k$, we can write the expectation in equation (\ref{momentsofuepsilon}) as follows: 
\begin{align}\label{momentsofuepsilonconditional}
M_{k,\e} &=  \int_{\mathbb{R}^{(N+1)kd}} H_{\epsilon}^{t,\pmb{r}_N}(\pmb{\eta},\pmb{\theta})   \left(\prod_{j=1}^{k} \prod_{m=1}^Np_{\epsilon}(\eta_m^j-z_m)\right)
 \varphi (\pmb{\eta}, \pmb{\theta})   u_0(d\pmb{\theta}) d\pmb{\eta} ,
 \end{align}
where
\begin{equation} \label{Hep}
    H_{\epsilon}^{t,\pmb{r}_N}(\pmb{\eta},\pmb{\theta}) :=\E\left(\exp{\left(\sum_{1\leq j<l\leq k}\int_0^t\Lambda_{2\epsilon}(\widehat{B}_{0,\pmb{t}-\pmb{r}_N,t}^{j,x,\pmb{\eta}_N^j,\theta^j}(s)-\widehat{B}_{0,\pmb{t}-\pmb{r}_N,t}^{l,x,\pmb{\eta}_N^l, \theta^l}(s))ds \right)}\right)
    \end{equation}
 and $u_0(d\pmb{\theta})=\prod_{j=1}^{k}u_0(d\theta^j)$.

\begin{proposition} \label{lambdaestimates}
Suppose $\Lambda$ satisfies the Dalang's condition (\ref{Dalangscondition}) and $k\geq 2$ be fixed. With the above notation  let 
$    \{ \widehat{B}_{0, \pmb{t} -\pmb{r}_N, t }^{j,x, \pmb{\eta}^j_N, \theta^j }(s), s\in [0,t]\}$,  $ j=1,\dots, k $
     be  a  family of $k$ independent $d$-dimensional  Brownian motions,
starting at $x$ and pinned at times $t-r_N<\cdots < t-r_1<t $ to be equal to $\eta^j_N ,\dots, \eta^j_1,\theta^j$.
 For $\kappa\in\mathbb{R}$ we set
\[
H^{t,\pmb{r}_N}_{\kappa,\epsilon}(\pmb{\eta},\pmb{\theta})  
    =:\E\left(\exp{\left(\kappa\sum_{1\leq j<l\leq k}\int_0^t\Lambda_{2\epsilon}(\widehat{B}_{0,\pmb{t}-\pmb{r}_N,t}^{j,x,\pmb{\eta}_N^j,\theta^j}(s)-\widehat{B}_{0,\pmb{t}-\pmb{r}_N,t}^{l,x,\pmb{\eta}_N^l, \theta^l}(s))ds \right)}\right).
        \]
    Then, 
\[
    \sup\limits_{\epsilon>0}\,\,\sup\limits_{0<r_1<\cdots<r_N<t}\,\,\sup\limits_{ (\pmb{\eta}, \pmb{\theta})  \in \mathbb{R}^{k(N+1)d}}H^{t,\pmb{r}_N}_{\kappa,\epsilon}(\pmb{\eta},\pmb{\theta})  
        < \infty
\]
and 
$H^{t,\pmb{r}_N}_{\kappa,\epsilon}(\pmb{\eta},\pmb{\theta})$ converges to 
\[
 H_{\kappa}^{t,\pmb{r}_N}(\pmb{\eta},\pmb{\theta})
=: \E\left(\exp{\left(\kappa\sum_{1\leq j<l\leq k}\int_0^t\Lambda\left((\widehat{B}_{0,\pmb{t}-\pmb{r}_N,t}^{j,x,\pmb{\eta}_N^j,\theta^j}(s)-\widehat{B}_{0,\pmb{t}-\pmb{r}_N,t}^{l,x,\pmb{\eta}_N^l, \theta^l}(s)\right)ds \right)}\right),
\]
 as $\epsilon \to 0$ uniformly in $\pmb{\eta}$, $\pmb{\theta}$ and $\pmb{r}_N$.
\end{proposition}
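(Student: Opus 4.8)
The plan is to exploit the Markov property of Brownian motion to split each multiply-pinned process over the $N+1$ subintervals determined by its pinning times, and then to reduce the whole statement to an application of Proposition \ref{basic} on each subinterval. First I would introduce the ordered pinning times $0=s_0<s_1<\cdots<s_N<s_{N+1}=t$, where $s_0=0$ corresponds to the starting point $x$, the interior times $s_1<\cdots<s_N$ are the reordering of $t-r_N<\cdots<t-r_1$ with associated pins $\eta^j_N,\dots,\eta^j_1$, and $s_{N+1}=t$ carries the pin $\theta^j$. Set $I_m=[s_m,s_{m+1}]$ for $m=0,\dots,N$. By the Markov property, conditionally on the (deterministic) pinned values, the restriction of $\widehat{B}_{0,\pmb{t}-\pmb{r}_N,t}^{j,x,\pmb{\eta}_N^j,\theta^j}$ to each $I_m$ is an ordinary Brownian bridge between the two consecutive pins; these $N+1$ segments are mutually independent, and the families indexed by $j=1,\dots,k$ are independent across $j$.

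Next, on each subinterval $I_m$ I would apply the representation (\ref{BB}) to write the $j$-th bridge segment as a standard bridge $\widehat{B}^j_{I_m}$ from $0$ to $0$ plus an affine deterministic term. The difference occurring in the exponent then takes the form $\widehat{B}^j_{I_m}(s)-\widehat{B}^l_{I_m}(s)+\alpha^{j,l}_m(s)$, where $\alpha^{j,l}_m$ is the deterministic difference of the two affine drifts and therefore depends only on $\pmb{\eta},\pmb{\theta}$ and on the endpoints $s_m,s_{m+1}$. Splitting $\int_0^t=\sum_{m=0}^N\int_{I_m}$ and writing
\[
G_{m,\epsilon}=\sum_{1\le j<l\le k}\int_{I_m}\Lambda_{2\epsilon}\bigl(\widehat{B}^j_{I_m}(s)-\widehat{B}^l_{I_m}(s)+\alpha^{j,l}_m(s)\bigr)\,ds,
\]
the independence of the segments across $m$ yields the factorization
\[
H^{t,\pmb{r}_N}_{\kappa,\epsilon}(\pmb{\eta},\pmb{\theta})=\prod_{m=0}^N\E\bigl(\exp(\kappa\,G_{m,\epsilon})\bigr).
\]

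Each factor is precisely of the type controlled by Proposition \ref{basic}, applied on $I_m\subset[0,t]$ with the harmless reparametrization $\epsilon\mapsto 2\epsilon$ and with drift $\alpha^{j,l}_m$. For the uniform bound, part (i) gives $\E(\exp(\kappa\,G_{m,\epsilon}))\le K_{t,\kappa}$ uniformly over $\epsilon$, over the drifts (hence over $\pmb{\eta},\pmb{\theta}$) and over the subinterval (hence over $\pmb{r}_N$); the product then satisfies $H^{t,\pmb{r}_N}_{\kappa,\epsilon}(\pmb{\eta},\pmb{\theta})\le K_{t,\kappa}^{\,N+1}$, which is the first claim. For the convergence, part (iii) shows that each factor converges, as $\epsilon\downarrow 0$, to $\E(\exp(\kappa\,G_m))$ uniformly in $\alpha^{j,l}_m$ and in the endpoints of $I_m$; since the product has only $N+1$ factors, all uniformly bounded by the previous step, a standard telescoping estimate upgrades the factorwise uniform convergence to uniform convergence of the product to $H^{t,\pmb{r}_N}_{\kappa}(\pmb{\eta},\pmb{\theta})$, giving the second claim.

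I expect the main obstacle to be the careful justification of the independent-bridge decomposition: identifying the correct affine drifts $\alpha^{j,l}_m$ on each $I_m$, and checking that the resulting difference process falls exactly under the hypotheses of Proposition \ref{basic}, so that all the parameter dependence on $\pmb{\eta},\pmb{\theta},\pmb{r}_N$ is absorbed either into the arbitrary drift $\alpha$ or into the choice of subinterval—both of which are already handled uniformly in that proposition. Once this matching is in place, the bound and the uniform convergence follow immediately from the finiteness of the product.
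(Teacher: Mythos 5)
Your proposal is correct and follows essentially the same route as the paper: decompose $[0,t]$ at the pinning times, use the Markov property to write the multiply-pinned process on each subinterval as an independent standard bridge plus an affine drift via (\ref{BB}), factorize the expectation over the $N+1$ segments, and apply Proposition \ref{basic} (i) and (iii) to each factor. The only addition is your explicit telescoping step for the uniform convergence of the finite product, which the paper leaves implicit.
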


\begin{proof}
Consider the decomposition
\[
    \int_0^t \Lambda_{2\epsilon}\left(\widehat{B}_{0,\pmb{t}-\pmb{r}_N,t}^{j,x,\pmb{\eta}_N^j,\theta^j}(s)-\widehat{B}_{0,\pmb{t}-\pmb{r}_N,t}^{l,x,\pmb{\eta}_N^l, \theta^l}(s))(s) \right)ds=\sum_{m=0}^N\int_{t-r_{m+1}}^{t-r_{m}}\Lambda_{2\epsilon}\left(\widehat{B}_{0,\pmb{t}-\pmb{r}_N,t}^{j,x,\pmb{\eta}_N^j,\theta^j}(s)-\widehat{B}_{0,\pmb{t}-\pmb{r}_N,t}^{l,x,\pmb{\eta}_N^l, \theta^l}(s)\right)ds,
\]
where $r_0=0$ and $r_{N+1}=t$.
For all $j=1,\dots, k$, and  $m=0,1,\dots, N$,  \
\[
\left\{ \widehat{B}_{0,\pmb{t}-\pmb{r}_N,t}^{j,x,\pmb{\eta}_N^j,\theta^j}(s), s\in [t-r_{m+1}, t-r_{m}] \right\} 
\]
 is a Brownian bridge that starts at $\eta^j_{m+1}$ and ends at $\eta^j_{m}$ with the convention $\eta^j_{N+1}=x$ and $\eta^j_0=\theta^j$.    Therefore,  using (\ref{BB}), for each $j=1,\dots, k$, and  $m=0,1,\dots, N$ we can write, for $s\in [t-r_{m+1}, t-r_m]$,
\[
\widehat{B}_{0,\pmb{t}-\pmb{r}_N,t}^{j,x,\pmb{\eta}_N^j,\theta^j}(s) = \widehat{B}^j_{t-r_{m+1}, t-r_{m}}(s)+  \frac {s-t+r_{m+1}}{ r_{m+1} - r_m} \eta^j_m +  \frac {t-r_{m}-s}{ r_{m+1} - r_m} \eta^j_{m+1},
\]
where the  $\widehat{B}^j_{t-r_m,t- r_{m+1}}$ are independent $d$-dimensional Brownian bridges from $0$ to $0$ in each interval $[t-r_{m+1},t- r_{m}]$.  
Moreover, the family of processes
$$\left\{\widehat{B}^j_{t-r_{m+1}, t-r_{m}},  1\le j \le k, 0\le m\leq N\right\} $$ are   independent.
For
 $j,l=1,\dots, k$, and  $m=0,1,\dots, N-1$ set
\[
\alpha^{j,l}_m(s) =\frac {s-r_m}{ r_{m+1} - r_m} (\eta^j_m -\eta^l_m) +  \frac {r_{m+1}-s}{ r_{m+1} - r_m} (\eta^j_{m+1}-\eta^l_{m+1})
\]
and  $\alpha^{j,l}_N= \frac { s-r_
N}{ t-r_N}( \eta_N^j- \eta_N^l )$.
Then,
\[
     H_{\kappa, \epsilon}^{t,\pmb{r}_N}(\pmb{\eta}, \pmb{\theta})= \prod_{m=0}^N F_{\kappa, \epsilon}^{m}(\pmb{\eta}, \pmb{\theta}), 
\]
where  for $m=0, \dots, N$,
\[
    F_{\kappa, \epsilon}^m(\pmb{\eta}, \pmb{\theta}) :=\E\left(\exp{\left(\kappa\sum_{1\leq j<l\leq k}\int_{t-r_{m+1}}^{t-r_{m}}\Lambda_{2\epsilon}\left(B_{t-r_{m+1},t-r_{m}}^{j}(s)-B_{t-r_{m+1},t-r_{m}}^{l}(s)+\alpha_{m}^{jl}(s)\right)ds \right)}\right).
\]
Then by Proposition  \ref{basic} (i) and (iii), we have that for each fixed $t>0$ and $k\geq 2$, 
\begin{align*}
    \sup\limits_{\epsilon>0}\,\,\sup\limits_{0<r_1<\cdots<r_N<t}\,\,\sup\limits_{(\pmb{\eta} , \pmb{\theta}) \in \mathbb{R}^{k(N+1)d}}F^{m}_{\kappa,\epsilon}(\pmb{\eta}, \pmb{\theta}) <\infty,
\end{align*} 
and as $\epsilon \to 0$, $F_{\kappa, \epsilon}^{m}$ converges  uniformly in $\pmb{\eta}$, $\pmb{\theta}$ and $\pmb{r}_N$ to
\[
     F^{m}_\kappa(\pmb{\eta}, \pmb{\theta}):=\E\left(\exp{\left(\kappa\sum_{1\leq j<l\leq k}\int_{t-r_{m+1}}^{t-r_{m}}\Lambda\left(B_{t-r_{m+1},t-r_{m}}^{j}(s)-B_{t-r_{m+1},t-r_{m}}^{l}(s)+\alpha_{m}^{jl}(s)\right)ds \right)}\right).
\]
The proposition follows.
\end{proof}

Proposition \ref{lambdaestimates}  together with the expression for the moments in  (\ref{momentsofuepsilonconditional}) imply that
 the family of random variables $\{D^N_{\pmb{r}_N,\pmb{z}_N}u^{\epsilon}(t,x), \epsilon \in (0,1]\}$ has  uniformly bounded moments of all orders.
 The next result provides the limit as $\e$ tends  to zero of the moment of order $k$ of the iterated derivative of $u^\e(t,x)$.
 
\begin{proposition}{\label{momentsofderivative}}
Let $u^{\epsilon}$ be as defined in (\ref{epsilonsolution}). Then for $k\ge2$, we have, with the notation introduced in Proposition
\ref{lambdaestimates},
\begin{align*}
  &   \lim_{\epsilon \rightarrow 0} \E \left(\left( D_{\pmb{r}_N, \pmb{z}_N}  u^{\epsilon}(t,x)\right)^k\right) 
   = \left[  \prod_{m=1}^{N-1} p_{r_{m+1}-r_m}(z_{m+1}-z_m) \right]^k p^k_{t-r_N}(x-z_N) \\
 & \quad \times   \int_{\R^{kd}}  \prod_{j=1}^k u_0(d\theta^j)  \prod_{j=1}^kp_{r_1}(z_1- \theta^j)  \E  \left(\exp{\left(\sum_{1\leq j<l\leq k}\int_0^t  \Lambda\left(\widehat{B}_{0,\pmb{t-r}_N,t}^{j,x ,\pmb{z}_N, \theta^j}(s)-\widehat{B}_{0,\pmb{t-r}_N,t}^{l,x,\pmb{z}_N, \theta^l}(s)\right)ds \right)}  \right)    .
\end{align*}
\end{proposition}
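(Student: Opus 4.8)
The plan is to pass to the limit $\e\downarrow 0$ in the expression (\ref{momentsofuepsilonconditional}) for $M_{k,\e}$. Two separate effects of $\e$ must be disentangled: the kernel $H_\e^{t,\pmb r_N}$ carries the mollified covariance $\Lambda_{2\e}$, while the factors $\prod_{j=1}^k\prod_{m=1}^N p_\e(\eta_m^j-z_m)$ form an approximate identity in $\pmb\eta$ that concentrates each $\eta_m^j$ at $z_m$. Writing $\rho_\e(\pmb\eta):=\prod_{j,m}p_\e(\eta_m^j-z_m)$, I would split
\[
M_{k,\e}=\int H^{t,\pmb r_N}(\pmb\eta,\pmb\theta)\,\rho_\e(\pmb\eta)\,\varphi(\pmb\eta,\pmb\theta)\,u_0(d\pmb\theta)\,d\pmb\eta \;+\; R_\e,
\]
where $H^{t,\pmb r_N}$ is the limiting kernel of Proposition \ref{lambdaestimates} and $R_\e$ collects the difference $H_\e^{t,\pmb r_N}-H^{t,\pmb r_N}$ against $\rho_\e\varphi\,u_0$.

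First I would dispose of $R_\e$. Since the chain of heat kernels in $\varphi$ can be bounded by the constant $C_0=\big(\prod_{m=1}^{N-1}(2\pi(r_{m+1}-r_m))^{-d/2}\big)(2\pi(t-r_N))^{-d/2}$, integrating out $\eta_2^j,\dots,\eta_N^j$ (each against a total-mass-one kernel) and $\eta_1^j$ by Chapman--Kolmogorov gives
\[
\int \rho_\e(\pmb\eta)\,\varphi(\pmb\eta,\pmb\theta)\,d\pmb\eta \;\le\; C_0^{\,k}\prod_{j=1}^k p_{\e+r_1}(z_1-\theta^j).
\]
For $\e\in(0,1]$ the right-hand side is dominated by a constant times $\prod_{j}\exp(-c|z_1-\theta^j|^2)$, which by (\ref{initial}) is integrable against $\prod_j|u_0|(d\theta^j)$ uniformly in $\e$. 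Combining this uniform bound with the uniform convergence $\sup_{\pmb\eta,\pmb\theta}|H_\e^{t,\pmb r_N}-H^{t,\pmb r_N}|\to0$ supplied by Proposition \ref{lambdaestimates} shows $R_\e\to0$.

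It then remains to treat the main integral. For fixed $\pmb\theta$ the map $\pmb\eta\mapsto H^{t,\pmb r_N}(\pmb\eta,\pmb\theta)\varphi(\pmb\eta,\pmb\theta)$ is bounded and, I claim, continuous; granting this, the approximate identity $\rho_\e$ forces its integral to converge to the value at the configuration $\eta^j_m=z_m$ (all $j,m$), namely $H^{t,\pmb r_N}(\pmb z_N,\pmb\theta)\varphi(\pmb z_N,\pmb\theta)$, for every $\pmb\theta$. Passing this limit under $\int\,\cdot\,u_0(d\pmb\theta)$ is justified by dominated convergence using the $\e$-uniform bound of the previous step, and evaluating $\varphi(\pmb z_N,\pmb\theta)=\big(\prod_j p_{r_1}(z_1-\theta^j)\big)\big(\prod_{m=1}^{N-1}p_{r_{m+1}-r_m}(z_{m+1}-z_m)\big)^k p_{t-r_N}(x-z_N)^k$, together with the observation that setting $\eta^j_m=z_m$ turns the bridges into the $\widehat B^{j,x,\pmb z_N,\theta^j}_{0,\pmb{t-r}_N,t}$ of the statement, yields exactly the asserted formula.

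The main obstacle is the continuity claim for $H^{t,\pmb r_N}(\cdot,\pmb\theta)$, which is delicate precisely because $\Lambda$ may be a genuine measure, so that $\Lambda$ composed with a bridge difference is only defined as an $L^2$ limit. I would resolve it through the factorization $H^{t,\pmb r_N}=\prod_{m=0}^N F^m_\kappa$ (with $\kappa=1$) established inside the proof of Proposition \ref{lambdaestimates}, in which the dependence on $\pmb\eta$ enters only through the affine shifts $\alpha_m^{jl}(s)$. Each regularized factor $F^m_{\kappa,\e}$ is continuous in $\alpha$, hence in $\pmb\eta$, by dominated convergence, since $\Lambda_{2\e}$ is bounded and continuous; and by Proposition \ref{basic}(iii) the convergence $F^m_{\kappa,\e}\to F^m_\kappa$ is uniform in $\alpha$. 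Therefore $F^m_\kappa$, being a uniform limit of continuous functions, is continuous in $\alpha$, and so $H^{t,\pmb r_N}(\cdot,\pmb\theta)$ is continuous in $\pmb\eta$, which closes the argument.
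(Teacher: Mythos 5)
Your proposal is correct and follows essentially the same route as the paper: the same splitting of $M_{k,\e}$ into a main term with the limiting kernel $H^{t,\pmb{r}_N}$ plus a remainder carrying $H_\e^{t,\pmb{r}_N}-H^{t,\pmb{r}_N}$, the remainder killed by the uniform convergence of Proposition \ref{lambdaestimates} together with a uniform-in-$\e$ integrable bound, and the main term handled by the approximate-identity property of $\prod_{j,m}p_\e(\eta_m^j-z_m)$ acting on a bounded continuous integrand. The only differences are cosmetic (you apply the approximate identity for fixed $\pmb\theta$ and then dominate over $u_0(d\pmb\theta)$, whereas the paper first integrates $\pmb\theta$ into a function $G(\pmb\eta)$ and convolves), and you usefully spell out the continuity of $H^{t,\pmb{r}_N}(\cdot,\pmb\theta)$ via the factorization into the $F^m_\kappa$ and uniform limits, a point the paper only asserts.
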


\begin{proof}  
Notice first that,  by Proposition \ref{basic}, for fixed  $\pmb{z}_N$  the expression
\[
H^{t, \pmb{z}_N}(\pmb{\theta}):=  \E  \left(\exp{\left(\sum_{1\leq j<l\leq k}\int_0^t \Lambda\left(\widehat{B}_{0,\pmb{t-r}_N,t}^{j,x, \pmb{z}_N, \theta^j}(s)-\widehat{B}_{0,\pmb{t-r}_N,t}^{l,x,\pmb{z}_N, \theta^l}(s)\right)ds \right)}  \right)
\]
is a bounded function of the variable   $\pmb{\theta}$. Thus, the integral in the above expression is well defined in view of condition (\ref{initial}).
From (\ref{momentsofuepsilonconditional}), we obtain
\[
M_{k,\e} = M^{(1)}_{k,\e} +  M^{(2)}_{k,\e} ,
\]
where
\[
M^{(1)}_{k,\e} := \int_{\mathbb{R}^{(N+1)kd}} \left(H_{\epsilon}^{t,\pmb{r}_N}(\pmb{\eta},\pmb{\theta})  - H^{t,\pmb{r}_N}(\pmb{\eta},\pmb{\theta})\right) \left( \prod_{j=1}^{k} \prod_{m=1}^Np_{\epsilon}(\eta_m^j-z_m)\right)
 \varphi (\pmb{\eta}, \pmb{\theta})   u_0(d \pmb{\theta}) d\pmb{\eta} 
  \]
    and
\[
M^{(2)}_{k,\e}  :=  \int_{\mathbb{R}^{(N+1)kd}} H^{t,\pmb{r}_N}(\pmb{\eta},\pmb{\theta})   \left(\prod_{j=1}^{k} \prod_{m=1}^Np_{\epsilon}(\eta_m^j-z_m)\right)
 \varphi (\pmb{\eta}, \pmb{\theta})  u_0(d \pmb{\theta}) d\pmb{\eta} ,
\]
with      $H_{\epsilon}^{t,\pmb{r}_N}(\pmb{\eta}, \pmb{\theta} )$  defined in (\ref{Hep}) and
\[
H^{t,\pmb{r}_N}(\pmb{\eta}, \pmb{\theta} ):=E\left(\exp{\left(\sum_{1\leq j<l\leq k}\int_0^t\Lambda_{2\epsilon}\left(\widehat{B}_{0,\pmb{t}-\pmb{r}_N,t}^{j,x,\pmb{\eta}_N^j,\theta^j}(s)-\widehat{B}_{0,\pmb{t}-\pmb{r}_N,t}^{l,x,\pmb{\eta}_N^l, \theta^l}(s)\right)ds \right)}\right).
\]

\medskip
\noindent
{\it Convergence of $M^{(2)}_{k,\e} $}: We claim that
\begin{align*}
\lim_{\epsilon \rightarrow 0} M^{(2)}_{k,\e}
& =\left[  \prod_{m=1}^{N-1} p_{r_{m+1}-r_m}(z_{m+1}-z_m) \right]^k p^k_{t-r_N}(x-z_N)\\
& \qquad \times  \int_{\R^{kd}}     \left(\prod_{j=1}^kp_{r_1}(z_1- \theta^j)\right)H^{t, \pmb{z}_N}(\pmb{\theta}) u_0(d \pmb{\theta})   .
\end{align*}
Indeed, $    M^{(2)}_{k,\e}= 
    \left( G * \phi_{\epsilon}\right)(\pmb{z}_N, \dots,  \pmb{z}_N),
$
where
\[  
 G(\pmb{\eta})= \int_{\R^{kd}}   H^{t,\pmb{r}_N}(\pmb{\eta}, \pmb{\theta} ) \left( \prod_{j=1}^k
 p_{r_1}( \eta_1^j -\theta^j)\right) \left(
 \prod_{m=1}^N p_{r_{m+1}-r_m}(\eta_{m+1}^j-\eta_{m}^j)  \right)u_0(d \pmb{\theta}) 
\]
with $\eta^j_{N+1}=x$
 and
 \[
\phi_{\epsilon}(\pmb{\eta})=\prod_{j=1}^k\prod_{m=1}^Np_{\epsilon}(\eta_m^j).
\]
Notice first  that $G$ is  integrable.  In fact, taking into account that  $H^{t,\pmb{r}_N}(\pmb{\eta}, \pmb{\theta} )$ is uniformly bounded by Proposition \ref{lambdaestimates}, we can write
\begin{align*}
  \int_{\R^{kNd}} |G(\pmb{\eta})| d \pmb{\eta}&  \le C   \int_{\R^{k(N+1) d}}  \prod_{j=1}^k
 p_{r_1}( \eta_1^j -\theta^j) \left(
 \prod_{m=1}^N p_{r_{m+1}-r_m}(\eta_{m+1}^j-\eta_{m}^j)  \right) |u_0|(d \pmb{\theta}) d\pmb{\eta} \\
&\le C    \left(\int_{\R^{ d}}  |u_0|(d\theta)  
 p_{t}( x -\theta) \right)^k <\infty.
\end{align*}
Moreover, again by Proposition \ref{lambdaestimates}, one can show that $G$ is uniformly continuous in $\pmb{\eta}_N^1,\dots,\pmb{\eta}^k_N$. Taking into account that  $\phi_{\epsilon}$ is an approximation to identity in $\R^{Nkd}$, we obtain  that\\ $\left( G * \phi_{\epsilon}\right)(\pmb{z}_N,\dots  \pmb{z}_N)$ converges to $ G(\pmb{z}_N,\dots,  \pmb{z}_N)$ as $\epsilon \to 0$. So, it  only remains to show that the first  term  $M^{(1)}_{k,\e}$ converges to $0$ as $\epsilon$ goes to $0$. This follows from Proposition \ref{lambdaestimates} combined with the dominated convergence theorem.
\end{proof}

In the next proposition we show that the approximated Malliavin derivatives converge in $L^p(\Omega)$, for each $p\ge 2$.

\begin{proposition}\label{Phi} For all  $0<r_1, \dots, r_N<t$ and $z_1, \dots, z_N \in \R^d$, 
 \[\displaystyle
 \lim_{\epsilon \to 0}D^N_{\pmb{r}_N, \pmb{z}_N} u^{\epsilon}({t,x})
  \] 
  exists in $L^p(\Omega)$, for all $p\ge 2$. We denote this limit by $\Phi_{\pmb{r}_N,\pmb{z}_N}(t,x)$.
  \end{proposition}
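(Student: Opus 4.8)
The plan is to verify that the family $\{D^N_{\pmb{r}_N,\pmb{z}_N}u^{\epsilon}(t,x)\}_{\epsilon\in(0,1]}$ is Cauchy in $L^2(\Omega)$ by computing its cross moments for two distinct regularization parameters, and then to upgrade the resulting $L^2$ convergence to $L^p$ convergence for every $p\ge 2$ using the uniform moment bounds already established. This mirrors the argument used in the proof of Proposition \ref{FeynmanKacTheorem} for the solution $u$ itself.

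First I would compute, for $\epsilon_1,\epsilon_2>0$, the cross moment $\E(D^N_{\pmb{r}_N,\pmb{z}_N}u^{\epsilon_1}(t,x)\,D^N_{\pmb{r}_N,\pmb{z}_N}u^{\epsilon_2}(t,x))$ along the lines leading to (\ref{momentsofuepsilon}). Introducing two independent $d$-dimensional Brownian motions $B^1,B^2$ independent of $W$ and integrating out the Gaussian noise $W$ (each $\int_0^tW^{\epsilon_i}(ds,B^{i,x}_{t-s})$ being, conditionally on $B^i$, centered Gaussian with variance $t(2\pi)^d\Lambda_{\epsilon_i}(0)$), the two diagonal variance terms cancel against the subtracted constants, and since $p_{\epsilon_1}*p_{\epsilon_2}=p_{\epsilon_1+\epsilon_2}$ only the cross covariance $\int_0^t\Lambda_{\epsilon_1+\epsilon_2}(B^{1,x}_s-B^{2,x}_s)\,ds$ survives. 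This yields
\begin{align*}
\E\big(D^N_{\pmb{r}_N,\pmb{z}_N}u^{\epsilon_1}(t,x)\,D^N_{\pmb{r}_N,\pmb{z}_N}u^{\epsilon_2}(t,x)\big)
&=\E\Big(u_0(B^{1,x}_t)u_0(B^{2,x}_t)\exp\Big(\int_0^t\Lambda_{\epsilon_1+\epsilon_2}(B^{1,x}_s-B^{2,x}_s)\,ds\Big)\\
&\qquad\times\prod_{m=1}^Np_{\epsilon_1}(B^{1,x}_{t-r_m}-z_m)\,p_{\epsilon_2}(B^{2,x}_{t-r_m}-z_m)\Big).
\end{align*}
Conditioning on $B^j_{t-r_m}+x=\eta^j_m$ and $B^j_t+x=\theta^j$ exactly as in the passage from (\ref{momentsofuepsilon}) to (\ref{momentsofuepsilonconditional}) rewrites this, with $k=2$, as an integral against the density $\varphi(\pmb{\eta},\pmb{\theta})$, a product of $p_{\epsilon_1}$ and $p_{\epsilon_2}$ approximate identities, the bridge functional $H^{t,\pmb{r}_N}_{\epsilon_1+\epsilon_2}(\pmb{\eta},\pmb{\theta})$, and $u_0(d\pmb{\theta})$.

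Next I would pass to the limit $(\epsilon_1,\epsilon_2)\to(0,0)$ by repeating verbatim the analysis in the proof of Proposition \ref{momentsofderivative}. The product $\prod_{m}p_{\epsilon_1}(\eta^1_m-z_m)p_{\epsilon_2}(\eta^2_m-z_m)$ is an approximate identity forcing $\eta^j_m\to z_m$, while Proposition \ref{lambdaestimates} guarantees that $H^{t,\pmb{r}_N}_{\epsilon_1+\epsilon_2}$ is uniformly bounded and converges to $H^{t,\pmb{z}_N}$ uniformly in $\pmb{\eta}$ and $\pmb{\theta}$; together with the integrability of the limiting kernel $G$ established in Proposition \ref{momentsofderivative} and dominated convergence for the difference term, the whole expression converges to the same limit $M_2$ obtained there for $k=2$. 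Since also $\E((D^N_{\pmb{r}_N,\pmb{z}_N}u^{\epsilon}(t,x))^2)\to M_2$ by Proposition \ref{momentsofderivative}, the expansion
\[
\E\big((D^N u^{\epsilon_1}-D^N u^{\epsilon_2})^2\big)=\E((D^N u^{\epsilon_1})^2)-2\E(D^N u^{\epsilon_1}D^N u^{\epsilon_2})+\E((D^N u^{\epsilon_2})^2)
\]
tends to $M_2-2M_2+M_2=0$, so the family is Cauchy in $L^2(\Omega)$ and converges to a limit $\Phi_{\pmb{r}_N,\pmb{z}_N}(t,x)$.

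Finally, to obtain convergence in $L^p(\Omega)$ for every $p\ge2$, I would invoke the uniform moment bound recorded just after Proposition \ref{lambdaestimates}, namely $\sup_{\epsilon\in(0,1]}\E(|D^N_{\pmb{r}_N,\pmb{z}_N}u^{\epsilon}(t,x)|^q)<\infty$ for all $q$. This renders $\{|D^N_{\pmb{r}_N,\pmb{z}_N}u^{\epsilon}(t,x)|^p\}_\epsilon$ uniformly integrable for each fixed $p$, and combined with the convergence in probability supplied by the $L^2$ convergence, the Vitali convergence theorem upgrades the convergence to $L^p(\Omega)$. The main obstacle is the joint double limit of the cross moment in the second step: one must check that two distinct mollification scales do not spoil the approximate-identity argument. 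This is controlled precisely because the bridge functional enters only through $\Lambda_{\epsilon_1+\epsilon_2}$, to which Proposition \ref{lambdaestimates} applies without change, and because the two independent heat-kernel families act on disjoint sets of integration variables.
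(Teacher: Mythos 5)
Your proposal is correct and follows essentially the same route as the paper: the paper's proof of Proposition \ref{Phi} likewise computes the cross moment $\E\big(D^N_{\pmb{r}_N,\pmb{z}_N}u^{\epsilon_1}(t,x)\,D^N_{\pmb{r}_N,\pmb{z}_N}u^{\epsilon_2}(t,x)\big)$ via two independent Brownian motions, conditions on the bridge endpoints, passes to the limit exactly as in Proposition \ref{momentsofderivative} to get the Cauchy property in $L^2(\Omega)$, and then upgrades to $L^p(\Omega)$ by the uniform boundedness of all moments. Your explicit appeal to Vitali's theorem merely spells out the last step that the paper states in one line.
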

\begin{proof}
First we show  that $D_{\pmb{r}_N, \pmb{z}_N}u^{\epsilon}({t,x})$ converges in $L^2(\Omega)$ as $\e$ tends to zero. For $\epsilon_1,\epsilon_2>0$, by  calculations similar to (\ref{momentsofuepsilon}) and denoting by
 $B^1$ and $B^2$ two  independent $d$-dimensional Brownian motions, we have
\begin{align*}
   &\E\Big(D^N_{\pmb{r}_N, \pmb{z}_N}u^{\epsilon_1}({t,x})D^N_{\pmb{r}_N, \pmb{z}_N}u^{\epsilon_2}({t,x}) \Big)\\ &= \E\left( \exp\left( \int_{0}^t \Lambda_{\epsilon_1+\epsilon_2} (B^1_{s}-B^2_{s})ds\right)\prod_{m=1}^N p_{\epsilon_1}(B_{t-r_m}^{1,x}-z_m)p_{\epsilon_2}(B_{t-r_m}^{2,x}-z_m)\right) \\
   &= \int_{\R^{2(N+1)d}}
   u_0(d\theta^1) u_0(d\theta^2) 
    \E\left( \exp\left( \int_{0}^t \Lambda_{\epsilon_1+\epsilon_2} \left(\widehat{B}^{1,x,\pmb{\eta}^1_N, \theta^1}_{0,\pmb{t-r}_N,t}{(s)}-\widehat{B}^{2,x,\pmb{\eta}^2_N,\theta^2}_{0,\pmb{t-r}_N},t{(s)}\right)ds\right)\right)\\ 
   &  \hspace{2cm}  \times 
   \prod_{j=1}^2     \prod_{m=1}^N p_{\epsilon_j}(\eta^j_m-z_m) \varphi(\pmb{\eta}, \pmb{\theta}) d\pmb{\eta} .
 \end{align*}
 Following the same proof as in (\ref{momentsofderivative})  we obtain that $\E\Big(D_{\pmb{r}_N, \pmb{z}_N}u^{\epsilon_1}({t,x})D_{\pmb{r}_N, \pmb{z}_N}u^{\epsilon_2}({t,x}) \Big)$ converges, as $\epsilon_1,\epsilon_2 \to 0$, to
 \begin{align*}  
& \left[  \prod_{m=1}^{N-1} p_{r_{m+1}-r_m}(z_{m+1}-z_m) \right]^2 p^2_{t-r_N}(x-z_N) \\
 & \quad \times   \int_{\R^{2d}}    u_0(d\theta^1)u_0(d\theta^2)   \prod_{j=1}^2 p_{r_1}(z_1- \theta^j)  \E  \left(\exp{\left( \int_0^t  \Lambda(\widehat{B}_{0,\pmb{t-r}_N,t}^{1,x ,\pmb{z}_N, \theta^1}(s)-\widehat{B}_{0,\pmb{t-r}_N,t}^{2,x,\pmb{z}_N, \theta^2}(s))ds \right)}  \right)    .
 \end{align*} 
This implies the convergence in $L^2(\Omega)$. By the boundedness of all moments of all orders, the convergence is in $L^p(\Omega)$ for all $p\ge 2$.
\end{proof}

The next step in the proof of Theorem \ref{thm1} is to show that  the limit  random field $\Phi_{\pmb{r}_N,\pmb{z}_N}(t,x)$ appearing in Proposition \ref{Phi} is precisely the iterated derivative  $D^N_{\pmb{r}_N, \pmb{z}_N} u({t,x})$. We recall that the iterated derivative $D^N u({t,x})$ is an $\mathcal{H}^{\otimes N}$-valued random variable. In the next result we will show first that $D^N u^{\e} ({t,x})$ converges to $D^N u({t,x})$ in  $L^2(\Omega;\mathcal{H}^{\otimes N})$, as $N$ tends to infinity. The proof of this fact will be based on the Wiener chaos expansion of  the solution $u(t,x)$ that we recall here.
The following chaos expansion was shown in \cite{hu2015stochastic}\footnote{In \cite{hu2015stochastic} the initial condition is continuous and bounded, but the result still holds for initial conditions satisfying (\ref{initial}).},
\[
    u(t,x)=(p_t* u_0)(x)+\sum_{n=1}^{\infty}I_n(f_{n,t,x}),
    \]
    where  $I_n$ denotes the multiple stochastic integral with respect to the noise $W$ and  $f_{n,t,x}\in \mathcal{H}^{\otimes n}$ is the symmetric kernel given by
   \[
    f_{n,t,x}(s,y):=f_{n,t,x}(s_1,y_1,\dots,s_n,y_n)=\frac{1}{n!} (p_{s_{\sigma(1)}}*u_0)(x_{\sigma(1)})\prod_{i=1}^n p_{s_{\sigma(i+1)}-s_{\sigma(i)}}(y_{\sigma{(i+1)}}-y_{\sigma{(i)}})
\]
for $(s_1,y_1,\dots,s_n,y_n) \in ((0,t) \times \R^d)^n$ satisfying $s_i \not= s_j$ for $i \not =j$.
In this expression,  $\sigma$ denotes the permutation of $\{1,\dots,n\}$ such that $0<s_{\sigma(1)}<\cdots <s_{\sigma(n)}<t$ and we use the convention 
$\sigma(n+1)=t$ and $y_{\sigma(n+1)}=x$.
In the same way, we can derive the Wiener chaos expansion of  $u^{\epsilon}(t,x)$ with respect to the noise $W$:
\[
u^{\epsilon}(t,x)= (p_t* u_0)(x)+ \sum_{n=1}^{\infty}I_n(f^{\epsilon}_{n,t,x}), 
\]
where
\begin{align*}
    f^{\epsilon}_{n,t,x}(s,y)&:=f^{\epsilon}_{n,t,x}(s_1,y_1,\dots,s_n,y_n)=\frac{1}{n!}\\
    & \quad \times \int_{\mathbb{R}^{nd}}
    (p_{s_{\sigma(1)}}*u_0)(w_1)\prod_{i=1}^n p_{s_{\sigma(i+1)}-s_{\sigma(i)}}(w_{i+1}-w_{i})p_{\epsilon}(y_{\sigma(i)}-w_{i})dw_{1} \cdots dw_n,
\end{align*}
with the same convention about $\sigma$ mentioned above.

\begin{proposition} \label{H^N} $\displaystyle
 \lim_{\epsilon \to 0}D^Nu^{\epsilon}({t,x}) = D^Nu({t,x})$ in $L^2(\Omega;\mathcal{H}^{\otimes N})$.
 \end{proposition}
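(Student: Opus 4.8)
The proof rests on the explicit Wiener chaos expansions of $u^{\epsilon}(t,x)$ and $u(t,x)$ just recalled. The plan is to compute the $N$-th Malliavin derivative of each chaos expansion term by term, using the fact that for a multiple integral $I_n(f_{n,t,x})$ the iterated derivative is
\[
D^N I_n(f_{n,t,x}) = \frac{n!}{(n-N)!}\, I_{n-N}\big(f_{n,t,x}(\cdot, \pmb{r}_N, \pmb{z}_N)\big),
\]
where the kernel is evaluated at the $N$ derivative variables and symmetrized. Thus $D^N u^{\epsilon}(t,x)$ and $D^N u(t,x)$ are represented as $\mathcal{H}^{\otimes N}$-valued sums of multiple integrals, and by the orthogonality of Wiener chaoses the squared $L^2(\Omega;\mathcal{H}^{\otimes N})$-norm of their difference splits as a sum over $n$ of the $\mathcal{H}^{\otimes n}$-norms (up to combinatorial factors) of $f^{\epsilon}_{n,t,x} - f_{n,t,x}$ with the derivative arguments fixed.

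First I would write down this difference norm explicitly and reduce the problem to showing two things: that each summand converges to $0$ as $\epsilon \to 0$, and that the tail of the series is uniformly small so that dominated convergence (or a uniform summability bound) applies. The pointwise convergence of each $f^{\epsilon}_{n,t,x}$ to $f_{n,t,x}$ in $\mathcal{H}^{\otimes n}$ is essentially the statement that the regularization $p_{\epsilon}$ acts as an approximation of the identity against the heat kernels; since the spectral side turns the $\mathcal{H}_0$ inner product into an integral against $\mu(d\xi)$ weighted by $e^{-\epsilon|\xi|^2}$, each term converges by monotone/dominated convergence in $\xi$. Second, I would control the series uniformly in $\epsilon$ by bounding the $\mathcal{H}^{\otimes n}$-norms of the kernels: the factor $e^{-\epsilon|\xi|^2}\le 1$ means the $\epsilon$-kernels have norms dominated by those of the limiting kernels (or a fixed comparable quantity), and the summability of $\sum_n \|f_{n,t,x}\|^2_{\mathcal{H}^{\otimes n}}$ with the derivative variables fixed is exactly what guarantees $u(t,x)\in\mathbb{D}^{N,2}$, which holds since $u(t,x)\in\mathbb{D}^{\infty}$.

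The main obstacle I anticipate is the uniform summability estimate on the tail of the chaos series, with the $N$ derivative variables $(\pmb{r}_N,\pmb{z}_N)$ frozen rather than integrated. One must show that $\sum_{n\ge N}\frac{1}{(n-N)!}\|f^{\epsilon}_{n,t,x}(\cdot,\pmb{r}_N,\pmb{z}_N)\|^2_{\mathcal{H}^{\otimes(n-N)}}$ is bounded uniformly in $\epsilon\in(0,1]$; this requires the standard Gaussian-type bounds on the iterated heat-kernel convolutions appearing in $f_{n,t,x}$ (the classical estimates controlling $\|f_{n,t,x}\|_{\mathcal{H}^{\otimes n}}$ via Dalang's condition), combined with the dominating bound $e^{-\epsilon|\xi|^2}\le 1$. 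Once the series is uniformly summable and each term converges, interchanging the limit and the sum yields the $L^2(\Omega;\mathcal{H}^{\otimes N})$ convergence. Finally, combining this with Proposition \ref{Phi}, the limit $\Phi_{\pmb{r}_N,\pmb{z}_N}(t,x)$ is identified as the density of $D^N u(t,x)$ in its $(\pmb{r}_N,\pmb{z}_N)$ arguments, completing the chain toward Theorem \ref{thm1}.
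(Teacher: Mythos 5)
Your proposal follows essentially the same route as the paper: Wiener chaos expansion, term-by-term differentiation, orthogonality of the chaoses, pointwise convergence of the kernels on the Fourier side coming from the factor $1-e^{-\epsilon\sum_{i}|\xi_i|^2}$, and a dominated-convergence argument whose uniform summability bound (carrying the extra polynomial factor $n^N$ from the $N$-fold differentiation) is supplied by Dalang's condition together with the combinatorial estimate of Lemma 3.3 in \cite{hu2015stochastic}. The one place your framing drifts is the claim that the tail must be controlled with $(\pmb{r}_N,\pmb{z}_N)$ frozen: for convergence in $L^2(\Omega;\mathcal{H}^{\otimes N})$ the derivative variables are integrated into the $\mathcal{H}^{\otimes N}$-norm, so the relevant quantity is $\sum_{n\ge N} n^N n!\,\|f^{\epsilon}_{n,t,x}-f_{n,t,x}\|^2_{\mathcal{H}^{\otimes n}}$ with the full kernels, which is exactly what the paper bounds (the frozen-variable control is needed elsewhere, for Proposition \ref{Phi}, and is obtained there by the Feynman--Kac computation rather than from the chaos expansion).
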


\begin{proof}
Define
 \[
  g_{n,t,x}^{\epsilon}(s,y):=f_{n,t,x}^{\epsilon}(s,y)-f_{n,t,x}(s,y).
\]
 We know that, in terms of the Wiener chaos expansion, the Malliavin derivative is obtained by leaving one variable free and multiplying by the order of the chaos. That is, 
 \[
 D^N_{\pmb{r}_N, \pmb{z}_N}  \left(u^{\epsilon}({t,x})-u({t,x})\right)=\sum_{n=N}^{\infty}n(n-1)\dots (n-N+1) I_{n-N}((g^{\epsilon}_{n,t,x}(\bullet,r_1,z_1,\dots,r_N,z_N))),
 \]
 for any  $z_1, \dots, z_N \in \R^d$ and $0<r_1 < \cdots < r_N$ and with the notation $\pmb{r}_N=(r_1, \dots, r_N)$ and $\pmb{z}_N=(z_1, \dots, z_N)$.
 This leads to
 \[
 \E\Big( \left| D^N _{\pmb{r}_N, \pmb{z}_N}  \left(u^{\epsilon}({t,x})-u({t,x})\right)\right|^2\Big)
 \le  \sum_{n=N}^{\infty}  n^N n! \|  g^{\epsilon}_{n,t,x}(\bullet,r_1,z_1,\dots,r_N,z_N) \|^2_{\mathcal{H}^{\otimes (n-N)}},
 \]
 which implies
  \[
 \E\left( \left \| D^N    \left(u^{\epsilon}({t,x})-u({t,x})\right)\right\|_{\mathcal{H}^{\otimes N}}^2\right)
 \le  \sum_{n=N}^{\infty}  n^N n! \|  g^{\epsilon}_{n,t,x}  \|^2_{\mathcal{H}^{\otimes n}}.
 \]
 Using the notation $\pmb{\xi} =(\xi_1, \dots, \xi_n)$,  $\mu(d \pmb{\xi})=\prod_{i=1}^n \mu(d\xi_1)$ and $d \pmb{s}= \prod_{i=1} ^n ds_i$, we have
\[
     \|g^{\epsilon}_{n,t,x}\|^2_{\mathcal{H}^{\otimes n}} = \int_{[0,t]^n}\int_{\mathbb{R}^{nd}}\left|\mathscr{F}g^{\epsilon}_{n,t,x}(\xi)\right|^2\mu(d\pmb{\xi})d\pmb{s},
\]
where $\mathscr{F}$ denotes the Fourier transform in the space variables.
 Now let us calculate the Fourier transform of $g_{n,t,x}^{\epsilon}$:
 \[
   \left|\mathscr{F}g^{\epsilon}_{n,t,x}(\pmb{\xi})\right|^2=\frac{1}{(n!)^2}\left(1-e^{-\epsilon \sum_{i=1}^n |\xi_i|^2}\right)\prod_{i=1}^n \exp\left(-(s_{\sigma(i+1)}-s_{\sigma(i)})\left|\sum_{j=1}^i\xi_{\sigma(j)}\right|^2\right),
   \]
  which tends to zero for all $\pmb{\xi}\in \R^{nd}$ and for all $n\ge 1$, as $\e\to 0$.
  
 By the dominated convergence theorem, in order to show   that $\lim_{\epsilon \to 0}D ^N u^{\epsilon}({t,x}) = D ^N u({t,x})$
   in $L^2(\Omega; \mathcal{H}^{\otimes N})$, 
  it suffices to check that
\[
I:= \sum_{n=1}^{\infty}  \frac {n^{N}}{n!} \int_{[0,t]^n}\int_{\mathbb{R}^{nd}}   \prod_{i=1}^n \exp\left(-(s_{\sigma(i+1)}-s_{\sigma(i)})\left|\sum_{j=1}^i\xi_{\sigma(j)}\right|^2\right) \mu(d\pmb{\xi})d\pmb{s}  <\infty.
 \]
 We can write
 \begin{align*}
 I&=\sum_{n=1}^{\infty}  n^{N} \int_{[0,t]_<^n}\int_{\mathbb{R}^{nd}}   \prod_{i=1}^n \exp\left(-(s_{i+1}-s_i )\left| \xi_1 + \cdots \xi_j \right|^2\right) \mu(d\pmb{\xi})d\pmb{s}, 
 \end{align*}
    where we used the notation $[0,t]_<^n:=\{(s_1,\dots,s_n):0<s_1<\cdots<s_n<t\}$ and the convention $s_0=0$.  This leads to the estimate
      \begin{align} \notag
   I   &\leq  \sum_{n=1}^{\infty}  n^{N} \int_{[0,t]^n_<} \prod_{j=1}^n \left(\sup_{\eta \in \mathbb{R^d}} \int_{\mathbb{R}^d} \exp{\left( - (s_j-s_{j-1})|\xi_j+\eta|^2 \right)}d\mu(\xi_j)\right) ds_1 \cdots ds_n \\   \label{BB1}
      &= \sum_{n=1}^{\infty}  n^{N} \int_{[0,t]^n_<} \prod_{j=1}^n \left( \int_{\mathbb{R}^d} \exp{\left( - (s_j-s_{j-1})|\xi_j|^2 \right)}d\mu(\xi_j)\right) ds_1 \cdots ds_n=:  \sum_{n=1}^{\infty}  n^{N} J_n,
      \end{align}
   because, as it is easy to check using the spectral measure $\mu$, the above supremum is attained at $\eta=0$.
     Let $w_j:=s_j-s_{j-1}$, $d\pmb{w}=dw_1\cdots dw_n$, and $A_{t,n} :=\{(w_1,\dots,w_n): w_j \geq 0, j=1,\dots, n \mbox{ and } w_1+\cdots+w_n \leq t\}$.  Then,   integral in (\ref{BB1} is equal to 
     \begin{align*}
        J_n= \int_{A_{t,n}} \int_{\mathbb{R}^{dn}} \exp{\left( -\sum_{j=1}^n w_j|\xi_j|^2 \right)}d\mu(\pmb{\xi}) d\pmb{w}.
    \end{align*}
    To estimate this quantity, set
     \begin{align*}
        C_M:= \int_{|\xi| \geq M}\frac{\mu(d\xi)}{|\xi|^2}, \mbox{   and     } D_M:=\mu(\{\xi \in \mathbb{R}^d: |\xi|\leq M\}).
    \end{align*}
   By Dalang's condition (\ref{Dalangscondition}), both $C_M,D_M$ are finite, and, we can choose $M >0$ such that $ C_M <\frac 14$.
Thus, by  \cite[Lemma 3.3]{hu2015stochastic}, we get
 \begin{equation}\label{BB2}
      J_n\leq \sum_{k=0}^n \binom{n}{k}\frac{t^k}{k!}D_M^k (2C_M)^{n-k}.
    \end{equation}
Substituting (\ref{BB2}) into  (\ref{BB1}),     we get
\[
   I   \leq \sum_{n=1}^{\infty} \sum_{k=0}^n n^N\binom{n}{k}\frac{t^k}{k!}D_M^k (2C_M)^{n-k} 
  \leq \sum_{k=0}^{\infty} \frac{(2D_Mt)^k}{k!} \sum_{n=k}^{\infty} n^N (4C_M)^{n-k} <\infty,
\]
which allows us to conclude the proof.
\end{proof}

In the next result we show that  $D ^N u({t,x})$ is actually a function and it coincides with $(\pmb{r}_N,\pmb{z}_N) \mapsto  \Phi_{\pmb{r}_N,\pmb{z}_N}(t,x)$,
where $\Phi_{\pmb{r}_N,\pmb{z}_N}(t,x)$ is the limiting random field in  Proposition \ref{Phi}.

\begin{proposition}  \label{prop4.5} Let $\Phi_{\pmb{r}_N,\pmb{z}_N}(t,x)$ be as in Proposition \ref{Phi}. Then, $\Phi_{\pmb{r}_N,\pmb{z}_N}(t,x)=D^N_{\pmb{r}_N,\pmb{z}_N}u(t,x)$ for almost all $(\pmb{r}_N, \pmb{z}_N) \in  [0,t]^N_< \times \R^{Nd}$.
\end{proposition}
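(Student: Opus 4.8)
The plan is to identify the pointwise limit $\Phi_{\pmb{r}_N,\pmb{z}_N}(t,x)$ produced in Proposition~\ref{Phi} with the function representative of the $\mathcal{H}^{\otimes N}$-valued random variable $D^Nu(t,x)$, by pairing both objects against a dense family of deterministic kernels. Recall that, since $\mathcal{H}=L^2(\R_+;\mathcal{H}_0)$, two functions $f,g$ representing elements of $\mathcal{H}^{\otimes N}$ satisfy
\[
\langle f,g\rangle_{\mathcal{H}^{\otimes N}}=\int_{\R_+^N}\int_{\R^{Nd}}f(\pmb{s},\pmb{y})\,\widetilde{g}(\pmb{s},\pmb{y})\,d\pmb{y}\,d\pmb{s},\qquad \widetilde{g}(\pmb{s},\pmb{y}):=\int_{\R^{Nd}}g(\pmb{s},\pmb{y}-\pmb{y}')\prod_{i=1}^N\Lambda(dy_i').
\]
Since the regularization $p_\epsilon$ makes $u^\epsilon(t,x)$ smooth, its iterated derivative $D^Nu^\epsilon(t,x)$ is genuinely represented by the explicit function $(\pmb{r}_N,\pmb{z}_N)\mapsto D^N_{\pmb{r}_N,\pmb{z}_N}u^\epsilon(t,x)$ displayed at the beginning of this section, which lies in $\mathcal{H}^{\otimes N}$ and is supported on $[0,t]^N$ in the time variables.

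I would fix a tensor product $g=\bigotimes_{i=1}^N g_i$ with $g_i(s,y)=a_i(s)b_i(y)$, where $a_i$ is continuous with compact support in $(0,t)$ and $b_i\in\mathscr{S}(\R^d)$; such kernels suffice to determine an element of $\mathcal{H}^{\otimes N}$, and the associated $\widetilde{g}$ factorizes as $\prod_i a_i(s_i)(b_i*\Lambda)(y_i)$ with each $b_i*\Lambda$ smooth of at most polynomial growth. On one hand, Proposition~\ref{H^N} gives $D^Nu^\epsilon(t,x)\to D^Nu(t,x)$ in $L^2(\Omega;\mathcal{H}^{\otimes N})$, so by Cauchy--Schwarz $\langle D^Nu^\epsilon(t,x),g\rangle_{\mathcal{H}^{\otimes N}}\to\langle D^Nu(t,x),g\rangle_{\mathcal{H}^{\otimes N}}$ in $L^2(\Omega)$. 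On the other hand, using the function representative,
\[
\langle D^Nu^\epsilon(t,x),g\rangle_{\mathcal{H}^{\otimes N}}=\int_{[0,t]^N}\int_{\R^{Nd}}D^N_{\pmb{r}_N,\pmb{z}_N}u^\epsilon(t,x)\,\widetilde{g}(\pmb{r}_N,\pmb{z}_N)\,d\pmb{z}_N\,d\pmb{r}_N,
\]
and by Proposition~\ref{Phi} the integrand converges in $L^2(\Omega)$, for each fixed $(\pmb{r}_N,\pmb{z}_N)$, to $\Phi_{\pmb{r}_N,\pmb{z}_N}(t,x)\,\widetilde{g}(\pmb{r}_N,\pmb{z}_N)$. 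Passing to the limit inside the integral by the vector-valued dominated convergence theorem and equating the two limits yields
\[
\langle D^Nu(t,x),g\rangle_{\mathcal{H}^{\otimes N}}=\int_{[0,t]^N}\int_{\R^{Nd}}\Phi_{\pmb{r}_N,\pmb{z}_N}(t,x)\,\widetilde{g}(\pmb{r}_N,\pmb{z}_N)\,d\pmb{z}_N\,d\pmb{r}_N \qquad \text{a.s.}
\]

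The domination required above comes from an $\epsilon$-uniform version of Corollary~\ref{cor1}: from the conditional representation (\ref{momentsofuepsilonconditional}) with $k=2$ and the uniform exponential-moment bound of Proposition~\ref{lambdaestimates} one obtains $\|D^N_{\pmb{r}_N,\pmb{z}_N}u^\epsilon(t,x)\|_{L^2(\Omega)}\le C\,(p_{r_1}*|u_0|)(z_1)\big(\prod_{m=1}^{N-1}p_{r_{m+1}-r_m}(z_{m+1}-z_m)\big)p_{t-r_N}(x-z_N)$, uniformly in $\epsilon$, and this bound multiplied by $|\widetilde{g}|$ is integrable over $[0,t]^N\times\R^{Nd}$ by the same reasoning as in Proposition~\ref{momentsofderivative}. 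Running the displayed identity over a countable family of such tensor products and fixing a common almost sure event then shows that $\Phi_{\cdot}(t,x)$ is the function representative of $D^Nu(t,x)$; by the symmetry of the iterated Malliavin derivative this gives $\Phi_{\pmb{r}_N,\pmb{z}_N}(t,x)=D^N_{\pmb{r}_N,\pmb{z}_N}u(t,x)$ for almost every $(\pmb{r}_N,\pmb{z}_N)\in[0,t]^N_<\times\R^{Nd}$. I expect the main obstacle to be precisely this dominated-convergence step: one must check that the short-time singularities of the factors $p_{r_{m+1}-r_m}$ as consecutive times coalesce, together with the polynomial growth of $b_i*\Lambda$, do not destroy integrability; both are controlled by first integrating out the spatial Gaussians, whose total mass absorbs the singular prefactors, and then invoking condition (\ref{initial}).
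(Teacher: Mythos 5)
Your overall strategy is the same as the paper's: test $D^Nu^\e(t,x)$ against a dense family of kernels $g$ in $\mathcal{H}^{\otimes N}$, rewrite the $\mathcal{H}^{\otimes N}$-pairing as a Lebesgue pairing of the function $(\pmb{r}_N,\pmb{z}_N)\mapsto D^N_{\pmb{r}_N,\pmb{z}_N}u^\e(t,x)$ against the $\Lambda$-smoothed kernel $\widetilde g$ (this is exactly the paper's $\Psi$ in its Step 2), and then identify the two limits using Proposition \ref{H^N} on one side and Proposition \ref{Phi} on the other. Two remarks. First, your $\widetilde g$ is in fact bounded, not merely of polynomial growth: $|(b_i*\Lambda)(y)|\le\int_{\R^d}|\mathscr{F}b_i(\xi)|\,\mu(d\xi)<\infty$ by Dalang's condition and the decay of $\mathscr{F}b_i$, which is precisely how the paper shows its $\Psi$ is continuous and bounded; so the growth issue you flag at the end does not arise.

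Second, and more seriously, the $\e$-uniform domination you invoke is false as stated. From (\ref{momentsofuepsilonconditional}) with $k=2$ one controls $\E[(D^N_{\pmb{r}_N,\pmb{z}_N}u^\e(t,x))^2]$ by a constant times $(G_2*\phi_\e)(\pmb{z}_N,\pmb{z}_N)$, where $G_2$ is the product of heat kernels built from $\varphi$ and $|u_0|$; convolving a Gaussian of variance $a$ with $p_\e$ produces a Gaussian of variance $a+\e$, and the pointwise ratio $p_{a+\e}(w)/p_a(w)$ is unbounded in $w$ for each fixed $\e>0$. Hence the right-hand side of your displayed bound does not dominate $\|D^N_{\pmb{r}_N,\pmb{z}_N}u^\e(t,x)\|_{L^2(\Omega)}$ uniformly in $\e$. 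This is repairable: for $\e\in(0,1]$ one can dominate by the analogous product of Gaussians with every variance enlarged by a fixed constant, which is still integrable over $[0,t]^N_<\times\R^{Nd}$ against the bounded $\widetilde g$, so your dominated-convergence step goes through after this correction. It is worth noting that the paper's Step 1 avoids this issue altogether: rather than dominating pointwise, it computes the mixed second moment $M_{\e_1,\e_2}$ of the paired random variables in closed form and passes to the limit using the uniform boundedness and uniform convergence of $H^{2,\pmb{r}_N}_{\e_1,\e_2}$ together with an approximation-to-the-identity argument, which yields the $L^2(\Omega)$ convergence of the pairings without any $\e$-uniform pointwise bound on the derivative itself.
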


\begin{proof} The proof will be done in two steps.

\medskip
\noindent
  {\it Step 1:} In this step we will show that  for any bounded and continuous function $\Psi: [0,t]^N \times \R^{N d} \rightarrow \R$,  the limit 
 \begin{equation} \label{conv1}
 \lim_{\epsilon \to 0}  \langle D^N  u^{\epsilon}({t,x}) ,\Psi \rangle_{L^2([0,t]^N\times \R^{Nd})} =  \langle \Phi  (t,x), \Psi \rangle_{L^2([0,t]^N\times \R^{Nd})}
  \end{equation}
   holds in $L^2(\Omega) $. We already know, by Proposition  \ref{Phi}, that for all $0<r_1 < \cdots <r_N <t$ and for all  $ \pmb{z}_N\in  \R^{dN}$
  the limit   $\lim_{\epsilon \to 0}D^N_{\pmb{r}_N, \pmb{z}_   N}  u^{\epsilon}({t,x}) = \Phi_{\pmb{r}_N,\pmb{z}_
   N}  (t,x)$ holds in $L^2(\Omega)$.  Then,  the convergence (\ref{conv1}) holds if
   \begin{align*}
 &  \lim_{\epsilon_1, \e_2 \to 0}  \E  \left( \int _{([0,t]^N_<)^2}\int_{\R^{2Nd}} D^N_{\pmb{r}_N, \pmb{z}_N}  u^{\epsilon_1}({t,x})D^N_{\pmb{s}_N, \pmb{y}_N}  u^{\e_2}({t,x})  \Psi( \pmb{r}_N,\pmb{z}_N) \Psi( \pmb{s}_N,\pmb{y}_N)
   d\pmb{z}_N    d\pmb{y}_N  d \pmb{r}_N d \pmb{s}_N \right)\\
   &\qquad 
  =   \E \left(  \int _{([0,t]^N_<)^2} \int_{\R^{2Nd}} \Phi_{\pmb{r}_N, \pmb{z}_N }  (t,x)  
\Phi_{\pmb{s}_N, \pmb{y}_N }  (t,x)  \Psi( \pmb{r}_N,\pmb{z}_N) \Psi( \pmb{s}_N,\pmb{y}_N)
   d\pmb{z}_N    d\pmb{y}_N   d \pmb{r}_N d \pmb{s}_N  \right).
   \end{align*}
   Set 
   \[
   M_{\e_1,\e_2}=  \E  \left( \int _{([0,t]^N_<)^2}\int_{\R^{2Nd}} D^N_{\pmb{r}_N, \pmb{z}_N}  u^{\epsilon_1}({t,x})D^N_{\pmb{s}_N, \pmb{y}_N}  u^{\e_2}({t,x})  \Psi( \pmb{r}_N,\pmb{z}_N) \Psi( \pmb{s}_N,\pmb{y}_N)
   d\pmb{z}_N    d\pmb{y}_N  d \pmb{r}_N d \pmb{s}_N \right).
   \]
  We have
  \begin{align*}
  M_{\e_1,\e_2} &= \int _{([0,t]^N_<)^2} d \pmb{r}_N  d \pmb{s}_N  \int_{\R^{2Nd}}  d\pmb{z}_N    d\pmb{y}_N
   \Psi( \pmb{r}_N,\pmb{z}_N) \Psi( \pmb{s}_N,\pmb{y}_N) \\
   & \qquad \times
   \int_{\R^{2(N+1)d}}   H^{2, \pmb{r}_N}_{\e_1,\e_2}   (\pmb{\eta}, \pmb{\theta})
   u_0(d\theta^1) u_0(d\theta^2)   
 \left( \prod_{m=1}^N p_{\epsilon_1}(\eta^1_m-z_m)p_{\epsilon_2}(\eta^2_m-y_m)\right) \varphi(\pmb{\eta}, \pmb{\theta}) d\pmb{\eta} ,
  \end{align*}
  where
  \[
  H^{2, \pmb{r}_N}_{\e_1,\e_2}   (\pmb{\eta}, \pmb{\theta})=:
  \E\left( \exp\left( \int_{0}^t \Lambda_{\epsilon_1+\epsilon_2} \left(\widehat{B}^{1,x,\pmb{\eta}^1_N, \theta^1}_{0,\pmb{t-r}_N,t}{(s)}-\widehat{B}^{2,x,\pmb{\eta}^2_N,\theta^2}_{0,\pmb{t-r}_N,t}{(s)}\right)ds\right)\right),
  \]
  where we used the  notation introduced in  (\ref{bridges}).
  From Proposition \ref{basic}, we know that  $H^{2, \pmb{r}_N}_{\e_1,\e_2}   (\pmb{\eta}, \pmb{\theta})$  is uniformly bounded and  converges as 
  $\e_1$ and $\e_2$ tend to zero, uniformly in $\pmb{\eta}$, $\pmb{\theta}$ and $\pmb{r}_N$  to
    \[
  H^{2, \pmb{r}_N}   (\pmb{\eta}, \pmb{\theta})=:
  \E\left( \exp\left( \int_{0}^t \Lambda \left(\widehat{B}^{1,x,\pmb{\eta}^1_N, \theta^1}_{0,\pmb{t-r}_N,t}{(s)}-\widehat{B}^{2,x,\pmb{\eta}^2_N,\theta^2}_{0,\pmb{t-r}_N,t}{(s)}\right)ds\right)\right).
  \]
  Moreover,
  \begin{align*}
&\int _{([0,t]^N_<)^2} d \pmb{r}_N  d \pmb{s}_N  \int_{\R^{2Nd}}  d\pmb{z}_N    d\pmb{y}_N
 |  \Psi( \pmb{r}_N,\pmb{z}_N) \Psi( \pmb{s}_N,\pmb{y}_N)| \\
   & \qquad \times
   \int_{\R^{2(N+1)d}}    
   |u_0|(d\theta^1) |u_0|(d\theta^2)   
  \left( \prod_{m=1}^N p_{\epsilon_1}(\eta^1_m-z_m)p_{\epsilon_2}(\eta^2_m-y_m)\right) \varphi(\pmb{\eta}, \pmb{\theta}) d\pmb{\eta}\\
  & \le \| \Psi\|_\infty^2  \frac {t^N}{N!} \left( \int_{\R^d} |u_0| (dy) p_t(x-y) \right)^2 <\infty.
\end{align*}
   As a consequence, 
   \[
      \lim_{\epsilon_1, \e_2 \to 0}    M_{\e_1,\e_2} =   \lim_{\epsilon_1, \e_2 \to 0}    N_{\e_1,\e_2},
      \]
      where
        \begin{align*}
  N_{\e_1,\e_2} &= \int _{([0,t]^N_<)^2} d \pmb{r}_N  d \pmb{s}_N  \int_{\R^{2Nd}}  d\pmb{z}_N    d\pmb{y}_N
   \Psi( \pmb{r}_N,\pmb{z}_N) \Psi( \pmb{s}_N,\pmb{y}_N) \\
   & \qquad \times
   \int_{\R^{2(N+1)d}}   H^{2, \pmb{r}_N}   (\pmb{\eta}, \pmb{\theta})
   u_0(d\theta^1) u_0(d\theta^2) \left(  
   \prod_{m=1}^N p_{\epsilon_1}(\eta^1_m-z_m)p_{\epsilon_2}(\eta^2_m-y_m)\right) \varphi(\pmb{\eta}, \pmb{\theta}) d\pmb{\eta} .
  \end{align*}
   Finally, taking into account that $\Psi$ is continuous and bounded, 
   we deduce the convergence
   \begin{align*}
   \lim_{\epsilon_1, \e_2 \to 0}    N_{\e_1,\e_2} &=
\int _{([0,t]^N_<)^2} d \pmb{r}_N  d \pmb{s}_N  \int_{\R^{2Nd}}  d\pmb{z}_N    d\pmb{y}_N
   \Psi( \pmb{r}_N,\pmb{z}_N) \Psi( \pmb{s}_N,\pmb{y}_N) \\
   & \qquad \times
   \int_{\R^{2(N+1)d}}   H^{2, \pmb{r}_N}   (\pmb{z}_N,  \pmb{y}_N, \pmb{\theta})
   u_0(d\theta^1) u_0(d\theta^2)   
    \varphi(\pmb{\eta}, \pmb{\theta}) d\pmb{\eta} ,
  \end{align*}
  where
  \[
  H^{2, \pmb{r}_N}   (\pmb{z}_N, \pmb{y}_N, \pmb{\theta})=:
  \E\left( \exp\left( \int_{0}^t \Lambda \left(\widehat{B}^{1,x,\pmb{z}_N, \theta^1}_{0,\pmb{t-r}_N,t}{(s)}-\widehat{B}^{2,x,\pmb{y}_N,\theta^2}_{0,\pmb{t-r}_N,t}{(s)}\right)ds\right)\right).
  \]
  This shows that   
  \[
  \lim_{\epsilon_1, \e_2 \to 0}    M_{\e_1,\e_2}=
 \E \left(  \int _{([0,t]^N_<)^2} \int_{\R^{2Nd}} \Phi_{\pmb{r}_N, \pmb{z}_N }  (t,x)  
\Phi_{\pmb{s}_N, \pmb{y}_N }  (t,x)  \Psi( \pmb{r}_N,\pmb{z}_N) \Psi( \pmb{s}_N,\pmb{y}_N)
   d\pmb{z}_N    d\pmb{y}_N   d \pmb{r}_N d \pmb{s}_N  \right)
   \] 
   and completes the proof of the convergence (\ref{conv1}).

  \medskip
 \noindent
  {\it Step 2:}
  We want to show that $D^Nu(t,x)$, which is defined as a random variable taking values in $\mathcal{H}^{\otimes N}$,  is actually a function and coincides with $\Phi^N(t,x)$, for almost  all $(\pmb{r}_N, \pmb{z}_N) \in  [0,t]^N_< \times \R^{Nd}$, almost everywhere. Because $\mathscr{S}(\R_+^N \times \R^{Nd})$ is dense in $\mathcal{H}^{\otimes N}$, it suffices to show that
  for every  $h \in \mathscr{S}(\R_+^N \times \R^{Nd})$, we have
  \begin{equation} \label{eq2}
  \langle  D^Nu(t,x), h \rangle_{\mathcal{H}^{\otimes N}} =  \langle  \Phi^N(t,x), h\rangle_{\mathcal{H}^{\otimes N}}.
  \end{equation}
  We know, from Proposition \ref{H^N} above, that  $  \langle  D^Nu(t,x), h\rangle_{\mathcal{H}^{\otimes N}}$ is the limit in  $L^2(\Omega)$ as $\e \to 0$ of  $  \langle  D^Nu^\e(t,x), h\rangle_{\mathcal{H}^{\otimes N}}$. Moreover,
\[
  \langle  D^Nu^\e(t,x), h \rangle_{\mathcal{H}^{\otimes N}}
  = \int_{[0,t]_<^N}  \int_{\R^{2Nd}}  D^N_{\pmb{r}_N, \pmb {z}_N}u^\e(t,x)   h(\pmb{r}_N, \pmb{z}_N-\pmb{y}_N)  \Lambda(d\pmb{y}_N) d\pmb{z}_N  
  d \pmb{r}_N.
  \]
  The function
  \begin{equation} \label{Psi}
  \Psi (\pmb{r}_N, \pmb{z}_N) := \int_{\R^{Nd} } h(\pmb{r}_N, \pmb{z}_N-\pmb{y}_N) \Lambda(d\pmb{y}_N) 
  \end{equation}
  is continuous and bounded because it can be written as
  \[
  \Psi (\pmb{r}_N, \pmb{z}_N) = \int_{\R^{Nd} }  e^{-i  \pmb{z}_N \cdot \pmb{\xi}} \mathscr{F}h(\pmb{r}_N,  \pmb{\xi} )  \mu(d\pmb{\xi})
  \]
  and $  \int_{\R^{Nd} }   | \mathscr{F}h(\pmb{r}_N,  \pmb{\xi} )  |\mu(d\pmb{\xi}) < \infty$.
  From  Step 1 applied to the function $\Psi$ defined in (\ref{Psi}),  we deduce
\begin{align*}
   \lim _{\e \to 0}  \langle  D^Nu^\e(t,x), h \rangle_{\mathcal{H}^{\otimes N}}
  &= \int_{[0,t]_<^N}  \int_{\R^{2Nd}}   \Phi^N _{\pmb{r}_N, \pmb{r}_N}  h(\pmb{r}_N, \pmb{z}_N-\pmb{y}_N) \Lambda(d\pmb{y}_N) d\pmb{z}_N  
  d \pmb{r}_N\\
  &=\langle  \Phi^N(t,x), h\rangle_{\mathcal{H}^{\otimes N}},
  \end{align*}
 where the convergence is in $L^2(\Omega)$. This completes the proof of the proposition.
  \end{proof}

\begin{proof}[Proof of Theorem \ref{thm1}]
From Proposition \ref{prop4.5},  iterated Malliavin derivative $D^N_{\pmb{r}_N,\pmb{z}_N}u(t,x)$ coincides with the random field
$\Phi_{\pmb{r}_N,\pmb{z}_N}(t,x)$ for almost all $\pmb{r}_N$ and $\pmb{z}_N$.
Therefore, using Proposition   \ref{Phi}   the moment of order $k$  of $D^N_{\pmb{r}_N,\pmb{z}_N}u(t,x)$ will be the limit as $\e \to 0$ of the moment of order $k$ of $D^N_{\pmb{r}_N,\pmb{z}_N}u^\e(t,x)$, which has been computed in Proposition \ref{momentsofderivative}. 
\end{proof}

\end{document}